\documentclass[11pt,leqno]{article}
\usepackage{graphicx, amsfonts, amsthm, amsxtra, amssymb, verbatim,latexsym,bbm}
\usepackage{calrsfs}
\usepackage{addfont}
\usepackage{hyperref}
\usepackage{color}   
\usepackage[mathscr]{euscript}
\usepackage[all,cmtip]{xy}
\usepackage{pifont}

\textheight 23truecm
\textwidth 15truecm
\addtolength{\oddsidemargin}{-1.05truecm}
\addtolength{\topmargin}{-2truecm}
\begin{document}

\def\Z{\mathbb{Z}}                   
\def\Q{\mathbb{Q}}                   
\def\C{\mathbb{C}}                   
\def\N{\mathbb{N}}                   
\def\R{\mathbb{R}}                   

\def\Ra{{\sf R}}                      
\def\T{{\sf T}}                      
\def\t{{\sf t}}                       

\def\tr{{{\mathsf t}{\mathsf r}}}                 

\def\rc{{Rankin-Cohen }}
\def\rs{{Ramanujan-Serre type derivation }}
\def\DHR{{\rm DHR }}            
\def\H{{\sf{ E}}}
\def\P {\mathbb{P}}                  
\def\E{{\cal E}}
\def\L{{\sf L}}             
\def\CX{{\cal X}}
\def\dt{{\sf d}}             
\def\LG{{\sf G}}   
\def\LA{{\rm Lie}(\LG)}   
\def\amsy{\mathfrak{G}}  
\def\gG{{\sf g}}   
\def\gL{\mathfrak{g}}   
\def\rvf{{ \mathfrak{H}}}   
\def\cvf{{\mathfrak{F}}}   
\def\di2{{ m}}   
\def\a{\mathfrak{a}}
\def\b{\mathfrak{b}}

\def\mmat{e}
\def\cmat{f}
\def\rmat{h}
\def\sl2{\mathfrak{sl}_2(\C)}
\def\SL2{{\rm SL}_2(\Z)}
\def\qmfs{\widetilde{\mathscr{M}}}             
\def\mfs{{\mathscr{M}}}                            
\def\cyqmfs{{ \mathcal{M}}}             
\def\cymfs{{\mathcal{M}^2}}                            
\def\Ramvf{{\sf Ra}}             
\def\rcv{{\sf D}}             
\def\rcdo{{\mathscr{D}}}             
\def\rcdomf{{\mathfrak{D}}}
\def\mdo{{\mathcal{R}}}             
\def\rdo{{\mathscr{W}}}             
\def\cdo{{\delta}}             
\def\rsdo{\mathfrak{d}}             
\def\rstdo{{\partial}}             

\def\G0g{{\Gamma_0(N_g)}}             
\def\mdg{{\Delta_g}}             
\def\cmdg{{\delta_g}}             
\def\qmfg{{{E_g}_2}}             
\def\mfg{{\mathcal{E}_g}}             
\def\wmdg{{w_g}}             

\def\fvs{{t_1}}             
\def\svs{{t_2}}             
\def\tvs{{t_3}}             
\def\dfvs{{R^1}}             
\def\dsvs{{R^2}}             
\def\dtvs{{R^3}}             
\def\fss{{{\sf t}_1}}             
\def\sss{{{\sf t}_2}}             
\def\tss{{{\sf t}_3}}             
\def\wfss{{{\sf w}_1}}             
\def\wsss{{{\sf w}_2}}             
\def\wtss{{{\sf w}_3}}             
\def\wssj{{{\sf w}_j}}             
\def\wss{{{\sf w}}}             

\def\md{{\Delta}}             
\def\wmd{{{\sf w}}}             
\def\mf{{\sf j}}             
\def\bmf{{\sf t}}             
\def\vs{{t}}             
\def\prs{\mathscr{P}}             
\def\nvs{{d}}             

\def\k{{\sf k}}                     

\def\qme{{\cal{E}}_2}             
\def\ew4{{\cal{E}}_4}             
\def \zn{{\frac{1}{N}\Z}}
\def \KKder{\theta}   
\def \KK{\vartheta}   

\def\cyqmfs{{ \widetilde{\mathcal{M}}}}             
\def\cymfs{{\mathcal{M}}}                            

\def\mdo{{\mathcal{R}}}             

\newtheorem{theo}{Theorem}[section]

\newtheorem{cor}{Corollary}[section]
\newtheorem{prob}{Problem}[section]
\newtheorem{lemm}{Lemma}[section]
\newtheorem{prop}{Proposition}[section]
\newtheorem{obs}{Observation}[section]
\newtheorem{conj}{Conjecture}
\newtheorem{nota}{Notation}[section]
\newtheorem{ass}{Assumption}[section]

\theoremstyle{definition}
\newtheorem{exam}{Example}[section]
\newtheorem{defi}{Definition}[section]
\newtheorem{rem}{Remark}[section]

\newtheorem{calc}{}
\numberwithin{equation}{section}

\begin{center}
{\LARGE\bf About quasi-modular forms, differential operators and Rankin--Cohen algebras }
\footnote{ MSC2020:
11F03,  	
16W50,  	
34A34  	
\\
Keywords: Modular forms, Rankin-Cohen algebras, nonlinear ODE, Lie algebra $\sl2$. }
\\

\vspace{.25in} {\large {\sc Younes Nikdelan \footnote{Departamento de An\'alise Matem\'atica, Instituto de Matem\'atica
e Estat\'{i}stica (IME), Universidade do Estado do
Rio de Janeiro (UERJ), Rua S\~{a}o Francisco Xavier, 524, Rio de Janeiro, Brazil / CEP: 20550-900. e-mail: younes.nikdelan@ime.uerj.br} }} \\
\end{center}
\vspace{.25in}
\begin{abstract}
We establish sufficient conditions, involving Rankin–Cohen (RC) brackets, under which certain combinations of meromorphic quasi-modular forms and their derivatives yield meromorphic modular forms. To achieve this, we adopt an algebraic perspective by working within the framework of RC algebras. First, we prove that any canonical RC algebra, whose underlying graded algebra is of type $\frac{1}{N}\mathbb{Z}$ with $N\in \mathbb{N}$, is a sub-RC algebra of a standard RC algebra. We then present and prove an algebraic formulation of results stating that specific combinations of the quasi-modular form $E_2$ with either other modular forms or itself, along with their derivatives, result in modular forms. Next, we provide equivalent formulations of these results in terms of the Lie algebra $\mathfrak{sl}_2(\mathbb{C})$ and Ramanujan systems of RC type. Finally, we discuss applications not only to meromorphic quasi-modular forms but also to Calabi–Yau quasi-modular forms.
\end{abstract}

\section{Introduction} \label{section introduction}

In the theory of quasi-modular forms, developing methods to construct new meromorphic modular forms from a given set of meromorphic  (quasi-)modular forms is of significant importance. One approach to this involves the use of derivatives of quasi-modular forms, which naturally connect to differential equations in other fields like string theory, quantum gravity, and conformal field theory. In these contexts, modular forms often emerge as solutions that encode deep arithmetic and geometric properties, frequently unveiling hidden symmetries within physical theories.

It is well known that the derivative of a modular form is a quasi-modular form, which is not necessarily modular. However, it has been established that for a given modular form $f$ of weight $k$ for $\SL2$, the Serre derivative, defined as
\begin{equation}\label{eq RS der intro}
  \rsdo f:=f'-\frac{k}{12}E_2f,
\end{equation}
where $E_2$ is the Eisenstein series of weight $2$, produces a modular form of weight $k+2$. Recall that $E_2$ is a quasi-modular form which is not modular. Additionally, the polynomial expression $kff''-(k+1)f'^2$ yields a modular form of weight $2k+4$. The latter polynomial relation was generalized by Rankin in \cite{ran56}, who described necessary conditions under which a polynomial relation involving a modular form and its derivatives remains a modular form. Cohen \cite{coh75} extended Rankin’s results, defining, for any non-negative integer $n$, a bilinear operator $F_n(\cdot,\cdot)$. He proved that for any modular forms $f$ and $g$ of weights $k$ and $l$, respectively, $F_n(f,g)$ is a modular form of weight $k+l+2n$. Zagier, in \cite{zag94}, referred to these bilinear operators as the \emph{Rankin--Cohen (RC) brackets}, denoted by $[\cdot,\cdot]_n$ (see \eqref{eq rcb mf}). Furthermore, he developed the theory of \emph{RC algebras} (see Section \ref{section RCa}), and in particular introduced the standard RC and canonical RC algebras, whose underlying space is a connected graded algebra of type $\Z_{\geq 0}$. Notably, he proved that any canonical RC algebra, with underlying connected graded algebra of type $\Z_{\geq 0}$, is a sub-RC algebra of a standard RC algebra.

The space of meromorphic quasi-modular forms can be structured within a graded algebra of type $\zn$ for some $N\in \N$. Additionally, in \cite{younes3}, the author investigated the theory of Calabi-Yau (CY) (quasi-)modular forms associated with the Dwork family, whose underlying graded algebra is of type $\Z$. The space of these forms were equipped with both standard and canonical algebraic RC structures. To demonstrate that the canonical algebraic RC structure forms a sub-RC algebra of the standard structure led the author to conjecture that Zagier's result generalizes to canonical RC algebras of type $\Z$. The first main result of the present paper addresses this conjecture, proving a stronger version for canonical RC algebras of type $\frac{1}{N}\mathbb{Z}$, where $N \in \N$.

\begin{theo} \label{thm1}
Any canonical RC algebra, with underlying graded algebra of type $\zn$, is a sub-RC algebra of a standard RC algebra.
\end{theo}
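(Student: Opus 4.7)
The plan is to adapt Zagier's embedding argument for $\Z_{\geq 0}$-graded canonical RC algebras to the $\zn$-graded setting. Let $(M_*,\partial)$ be the given canonical RC algebra, with $M_*=\bigoplus_{k\in\zn}M_k$, derivation $\partial:M_k\to M_{k+2}$, and canonical brackets determined by $\partial$ and the weights. The key observation is that the binomial coefficients $\binom{k+n-1}{r}$ appearing in the RC bracket formulas are polynomial in $k$, hence make perfect sense for rational weights; consequently the canonical brackets are already intrinsically defined on $M_*$, and no redefinition is needed to pass from $\Z_{\geq 0}$ to $\zn$.

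Next, I would construct the enlarged algebra $\widetilde{M}_*:=M_*[\Phi]$, where $\Phi$ is a formal indeterminate of weight $2$, graded by $\widetilde{M}_k=\bigoplus_{j\geq 0}M_{k-2j}\cdot\Phi^j$ for $k\in\zn$. A derivation $\widetilde{\partial}$ of weight $2$ is defined on $\widetilde{M}_*$ by requiring $\widetilde{\partial}|_{M_*}=\partial$ together with a Ramanujan-type prescription $\widetilde{\partial}(\Phi)=\tfrac{1}{12}(\Phi^2-\Psi)$ for a weight-$4$ element $\Psi$ (either taken from $M_4$ when available, or adjoined as a further formal generator, and likewise for higher-weight auxiliary elements needed to close the Ramanujan system). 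On $\widetilde{M}_*$ I would then impose the standard RC bracket formulas, which involve both $\widetilde{\partial}$ and multiplication by $\Phi$.

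The verification splits into two parts. First, one checks that $(\widetilde{M}_*,\widetilde{\partial},\Phi)$ satisfies the standard RC algebra axioms: bilinearity and weight-homogeneity of the brackets are immediate, while the associativity/Jacobi-type identity reduces to a polynomial identity in the weight variables $k,l$ and in $\widetilde{\partial}$. Zagier already verified this identity for non-negative integer weights, and since both sides are polynomial in $k,l$, the identity persists for all $k,l\in\zn$ by the principle that a polynomial identity valid on a Zariski-dense set is valid everywhere. Second, one verifies that $M_*\subset\widetilde{M}_*$ is closed under the standard brackets and that the restriction recovers the canonical brackets on $M_*$; this is a direct computation showing that all terms involving $\Phi$ cancel on the $\Phi$-degree-zero subalgebra, using precisely the canonical form of the brackets.

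The main obstacle will not be the combinatorics but rather the coherent adjunction of the auxiliary element $\Psi$ (and possibly further Ramanujan generators) when these do not already live in $M_*$. One must enlarge $M_*$ in a controlled way so as to support the derivation $\widetilde{\partial}(\Phi)=\tfrac{1}{12}(\Phi^2-\Psi)$, while simultaneously preserving the $\zn$-grading and ensuring that $M_*$ itself remains a sub-RC algebra after the enlargement. Once this bookkeeping is handled, including tracking negative and fractional weights throughout, the remainder of the argument proceeds as in Zagier's integer-weight case, since every combinatorial identity used survives the passage to rational weights as a polynomial identity in the weight parameters.
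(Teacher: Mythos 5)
There is a genuine gap, and it is at the heart of the construction: you define the extended derivation by $\widetilde{\partial}|_{M_*}=\partial$. With that choice $M$ is already closed under $\widetilde{\partial}$, so the standard bracket $[\cdot,\cdot]_{\widetilde{\partial},n}$ restricted to $M\otimes M$ is simply $\sum_j(-1)^j\tfrac{(k+j)_{n-j}(l+n-j)_j}{j!(n-j)!}\partial^jf\,\partial^{n-j}g$, which is \emph{not} the canonical bracket: the canonical bracket uses the corrected iterates $\partial_{(j)}$, whose recursion $\partial_{(j+1)}h=\partial(\partial_{(j)}h)+j(j+k-1)\ew4\,\partial_{(j-1)}h$ injects the weight-$4$ element $\ew4$ at every step. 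Your claim that ``all terms involving $\Phi$ cancel on the $\Phi$-degree-zero subalgebra'' is vacuous under your definition (no $\Phi$ ever appears when differentiating elements of $M$), and the restriction fails to recover the canonical structure. This is exactly the phenomenon the paper flags for $\Gamma={\rm SL}_2(\Z)$: $(\mfs(\SL2),[\cdot,\cdot]_{\rsdo,\ast})$ is a standard RC algebra, but its brackets differ from the classical ones. The correct definition (Theorem \ref{thm main1}) twists the derivation on $M$ itself: $\rcdo(f):=\partial(f)+k\qme f$ for $f\in M_k$, together with $\rcdo(\qme):=\ew4+\qme^2$; only then does the comparison $[f,g]_{\partial,\ew4,n}=[f,g]_{\rcdo,n}$ have a chance of holding.

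Two further points. First, the ``main obstacle'' you identify --- coherently adjoining $\Psi$ and possibly further Ramanujan generators --- is a non-issue: the weight-$4$ element $\ew4\in M_4$ is part of the data of a canonical RC algebra, and $\partial$ already maps $M$ to $M$, so the only element that needs to be adjoined is the single weight-$2$ generator $\qme$. Second, the difficulty that actually drives the paper's proof is not addressed in your proposal: the bracket comparison is proved via Cohen--Kuznetsov generating series, whose normalizing factors $\tfrac{1}{n!(k+n-1)!}$ degenerate when $k$ is a non-positive integer, forcing a splitting $CK=CK^-+CK^+$ and a four-case analysis according to the position of $n$ relative to $-k$, $-l$ and $-k-l$. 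Your appeal to ``a polynomial identity valid on a Zariski-dense set'' is a plausible alternative route for the identity $[f,g]_{\partial,\ew4,n}=[f,g]_{\rcdo,n}$ (for fixed $n$ both sides are polynomial in $k,l$), but you invoke it only for the RC axioms, and to make it work for the bracket comparison you would have to argue that Zagier's positive-weight result is a \emph{universal} identity in the free graded setting; as written, neither that argument nor the generating-function argument is carried out.
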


The main ingredient in the proof of this theorem is the Cohen-Kuznetsov series, with the primary challenge being its definition for elements of non-positive integer grading. A more detailed version of this theorem, which outlines the construction of the desired standard RC algebra, is given in Theorem \ref{thm main1} and will be proved in Section \ref{section proof 1}.

Note that while standard RC algebras serve as typical examples of RC algebras, the same is not immediately clear for canonical RC algebras. However, an immediate corollary of Theorem \ref{thm1} establishes that any canonical RC algebra is also an RC algebra.

RC bracket of modular forms gives a polynomial relation between two modular forms and their derivatives, which results in another modular form. Two natural questions arise: (1) is there a similar polynomial relation involving a modular form, a quasi-modular form, and their derivatives that produces a new modular form? (2) Does a similar result hold for two quasi-modular forms?

We previously mentioned one such polynomial relation when discussing the Serre derivation in \eqref{eq RS der intro}. Kaneko and Koike \cite{KK06} extended this idea, proving that for any non-negative integer $n$ and any modular form $f$ of weight $k$ for $\SL2$, the expression
\begin{equation}\label{eq KK der}
  \KKder^{(n)}f:=f^{(n+1)}-\frac{n+k}{12}[E_2,f]_n,
\end{equation}
is a modular form of weight $k+2+2n$. In this context, \eqref{eq KK der} expresses a polynomial relation involving the quasi-modular form $E_2$, the modular form $f$, and their derivatives. For $n=0$, the relation \eqref{eq KK der} reduces to the Serre derivation given in \eqref{eq RS der intro}. Besides this, If for any non-negative integer $n$ we define:
 \begin{equation}\label{eq KK der E_2}
  \KKder^{(n)}E_2:=\big( 1+(-1)^n \big)E_2^{(n+1)}-\frac{n+2}{12}[E_2,E_2]_n,
\end{equation}
Cohen and  Str\"{o}mberg, in \cite[Proposition 5.3.27]{cs17}, proved that $\KKder^{(n)} E_2$ is a modular form of weight $4+2n$ for $\SL2$. This result directly addresses the second question posed above. Observe that the equation \eqref{eq KK der E_2} for $n=2$ reduces to the well-known Chazy equation
\begin{equation}\label{eq Chazy mf}
  2y'''-2yy''+3(y')^2=0.
\end{equation}

The second main result of this paper seeks to generalize the results related to \eqref{eq KK der} and \eqref{eq KK der E_2} within a broader algebraic framework, and prove them without relying on the established properties of modular forms or the quasi-modular form $E_2$. To state the next main theorem, let $M$ and $\widetilde{M}$ be the canonical and standard RC algebras discussed in Theorem \ref{thm1}, respectively, equipped with RC brackets $[\cdot , \cdot]_{\rstdo,\ew4,\ast}$ and $[\cdot , \cdot]_{\rcdo,\ast}$. Here, $\rcdo$ and $\rstdo$ are derivations on $\widetilde{M}$ and $M$, respectively, while $\ew4\in M$ is an element of weight $4$. In the proof of Theorem \ref{thm1}, we observe the existence of a weight-2 element $\qme\in \widetilde{M}\setminus M$ such that $\widetilde{M}=M[\qme]$ and the restriction of the RC bracket satisfies $[\cdot , \cdot]_{\rcdo,\ast}|_{M\otimes M}=[\cdot , \cdot]_{\rstdo,\ew4,\ast}$. The element $\qme$ plays a role analogous to the quasi-modular form  $E_2$. Recall that, for the spaces of quasi-modular forms $\qmfs(\SL2)$ and  modular forms $\mfs(\SL2)$ associated with $\SL2$, it holds that $\qmfs(\SL2)=\mfs(\SL2)[E_2]$.

\begin{theo} \label{thm2}
For any non-negative integer $n$ and any homogeneous element $f\in M$ of weight $k$,  define:
\begin{align}
  &\KK^{(n)}f:=\rcdo^{n+1}f-(n+k)[\qme,f]_{\rcdo,n}, \label{eq KK RC alg}\\
  &\KK^{(n)}\qme:=\big( 1+(-1)^n \big)\rcdo^{n+1}\qme-(n+2)[\qme,\qme]_{\rcdo,n}. \label{eq KK E_2 RC alg}
\end{align}
Then, $\KK^{(n)}f$ and $\KK^{(n)}\qme$ belong to $M$ and are elements of weight $k+2+2n$ and $4+2n$, respectively.
\end{theo}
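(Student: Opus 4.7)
The plan is to exploit the explicit construction of $\widetilde{M}$ from $M$ established in the proof of Theorem \ref{thm1}, which provides a precise relationship between the derivations $\rcdo$ and $\rstdo$. In that construction, for any homogeneous $a\in M$ of weight $k$ one obtains a Serre-type identity of the form $\rcdo(a)=\rstdo(a)+\lambda k\qme a$ for a universal constant $\lambda$ (the algebraic counterpart of $f'=\rsdo f+\frac{k}{12}E_2 f$), together with an explicit expression for $\rcdo(\qme)$ as a combination of $\qme^2$ and $\ew4$. This lets us decompose each application of $\rcdo$ to an element of $M$ into a part living in $M$ (via $\rstdo$) and a correction proportional to $\qme$.

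For $\KK^{(n)}f$ the approach is induction on $n$. The base case $n=0$ reduces, after using $[\qme,f]_{\rcdo,0}=\qme f$ and the Serre-type identity with the appropriate $\lambda$, to the fact that $\rstdo(f)\in M$, which holds since $\rstdo$ is the canonical RC derivation. For the inductive step I would expand $\rcdo^{n+1}f=\rcdo(\rcdo^n f)$ using the derivation relation and the Leibniz rule, and match the resulting $\qme$-monomials against the explicit expansion
\[
[\qme,f]_{\rcdo,n}=\sum_{r=0}^{n}(-1)^r\binom{n+1}{n-r}\binom{n+k-1}{r}\rcdo^r\qme\cdot\rcdo^{n-r}f.
\]
The goal is to show that the weighted combination $(n+k)[\qme,f]_{\rcdo,n}$ cancels exactly the $\qme$-dependent terms produced by $\rcdo^{n+1}f$, leaving a homogeneous element of weight $k+2+2n$ in $M$. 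A cleaner alternative is to repackage this as a generating-function identity using the Cohen--Kuznetsov series $\tilde{f}(X)=\sum_{n\geq0}\frac{\rcdo^nf}{n!(k)_n}X^n$, mirroring the technique already deployed for Theorem \ref{thm1}; the claim then becomes a single transformation rule for $\tilde{f}(X)$ under a twist by $\qme$.

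For $\KK^{(n)}\qme$ the odd-$n$ case is immediate: $1+(-1)^n=0$ kills the first term, while the antisymmetry $[a,b]_{\rcdo,n}=(-1)^n[b,a]_{\rcdo,n}$ of the RC bracket forces $[\qme,\qme]_{\rcdo,n}=0$, so $\KK^{(n)}\qme=0\in M$. For even $n$ the same inductive or generating-function strategy applies with $f=\qme$: the formula for $\rcdo(\qme)$ in terms of $\qme^2$ and $\ew4$ allows an expansion of $\rcdo^{n+1}\qme$ into a combination of monomials in $\qme$ and elements of $M$, and the factor $1+(-1)^n=2$ is precisely what is needed to compensate for the asymmetry caused by $\qme\notin M$. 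The weight check $2(n+1)+2=4+2n$ matches the assertion.

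The main obstacle will be the combinatorial bookkeeping needed to verify that the coefficient $(n+k)$ and the binomial weights $\binom{n+1}{n-r}\binom{n+k-1}{r}$ conspire to annihilate every $\qme$-monomial produced in $\rcdo^{n+1}f$. A direct induction on $n$ is feasible but tedious; the generating-function reformulation compresses the entire cancellation into one polynomial identity in $M[[X]]$ and is likely the most economical route. Either way, the argument uses only the construction of $\widetilde{M}$ provided by Theorem \ref{thm1} and the defining axioms of the two RC brackets, with no recourse to properties specific to $E_2$ or to classical modular forms.
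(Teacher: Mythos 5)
Your overall strategy --- decompose $\rcdo$ into $\rstdo$ plus a $\qme$-correction via \eqref{eq der zag} and show that the $\qme$-dependent terms of $\rcdo^{n+1}f$ and $(n+k)[\qme,f]_{\rcdo,n}$ cancel --- is the right one and is the same in spirit as the paper's. But as written the proposal has a genuine gap: the cancellation, which is the entire content of the theorem, is deferred to unspecified ``combinatorial bookkeeping,'' and neither organizational device you offer is set up so that it would actually deliver it. The induction on $n$ is vacuous: you never state an induction hypothesis, and there is no recursion expressing $\KK^{(n+1)}f$ in terms of $\KK^{(n)}f$ that your ``inductive step'' could exploit --- what you describe there is simply a direct expansion, so nothing is gained from assuming the case $n$. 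The generating-function alternative is also underspecified: the bracket $[\qme,f]_{\rcdo,n}$ is a coefficient in a \emph{product} of two Cohen--Kuznetsov series, one attached to $f$ and one attached to $\qme$, so a transformation rule for $\widetilde{f}(X)$ alone cannot encode the claim; you would need to control $CK_\rcdo(\qme;X)$ as well.

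What is missing concretely is the structural analysis of the iterated derivatives of $\qme$ itself. The paper isolates this in Lemma \ref{lemm Psi} (the higher Serre-type derivatives split as $\rstdo_{(n)}\qme=\Psi_n+(-1)^n n!\,\qme^{n+1}$ with $\Psi_n\in M_{2n+2}$ given by the recursion $\Psi_{j+1}=\rstdo\Psi_j+j(j+1)\ew4\Psi_{j-1}$) and Lemma \ref{lemm qme^(k+1)} (an explicit expansion of $\rcdo^{n+1}\qme$ in terms of $\qme^{n+1-j}\Psi_j$), together with the extension \eqref{eq [qme,f]=[qme,f]}--\eqref{eq [qme,qme]=[qme,qme]} of Theorem \ref{thm main1} to brackets having $\qme$ as an argument --- note that Theorem \ref{thm1} by itself only covers arguments in $M$, so you cannot ``use only the construction of $\widetilde{M}$'' as you claim. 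With these inputs the paper rewrites $\KK^{(n)}f$ as $\rstdo_{(n+1)}f-(n+k)\sum_{j}(-1)^j\binom{n+1}{n-j}\binom{n+k-1}{j}\Psi_j\rstdo_{(n-j)}f$ and $\KK^{(n)}\qme$ as $2\Psi_{n+1}-(n+2)\sum_j(-1)^j\binom{n+1}{n-j}\binom{n+1}{j}\Psi_j\Psi_{n-j}$, which are manifestly in $M$ of the stated weights; a single elementary binomial identity then closes the computation. Your outline gets the easy parts right (the case $n=0$, the vanishing of $\KK^{(n)}\qme$ for odd $n$, the weight count), but without the $\Psi_j$-decomposition or an equivalent device the asserted cancellation remains unproved.
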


Both Theorem \ref{thm1} and Theorem \ref{thm2} have significant applications, and we highlight a few that have already appeared in the author's other contributions.

The most notable application, and the one that perhaps holds the greatest importance as the origin of Theorem \ref{thm1}, is an extension of the main result presented in \cite{younes3}. In that work, we introduced the spaces of CY quasi-modular forms, $\cyqmfs$, and CY modular forms, $\cymfs$, associated with the Dwork family, equipping $\cyqmfs$ with an RC algebraic structure. It is worth noting that the underlying graded algebra of $\cyqmfs$ is of type $\Z$, and $\cymfs \subset \cyqmfs$. However, due to the absence of Theorem \ref{thm1}, in \cite[Theorem 1.2]{younes3} we were only able to prove that the space of CY modular forms of \emph{positive weight} is closed under the RC brackets of the CY quasi-modular forms (see Example \ref{ex cyqms}). Now, thanks to Theorem \ref{thm1}, we can establish an extension of this result, using the same proof method presented in \cite{younes3}.

\begin{theo} \label{thm3}
{\bf(extension of \cite[Theorem 1.2]{younes3})}
The space of CY modular forms associated with the Dwork family is closed under the Rankin-Cohen brackets of the CY quasi-modular forms associated with the Dwork family.
\end{theo}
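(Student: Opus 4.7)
The plan is to follow the argument of \cite[Theorem 1.2]{younes3} almost verbatim, with Theorem \ref{thm1} removing the obstruction that previously forced the restriction to CY modular forms of positive weight. First I would apply Theorem \ref{thm1} to the canonical RC algebra $\cyqmfs$, whose underlying graded algebra is of type $\Z$. This produces a standard RC algebra containing $\cyqmfs$ as a sub-RC algebra, so that the RC brackets on the CY quasi-modular forms coincide with those inherited from this larger standard structure.

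Next, I would identify within $\cyqmfs$ the distinguished weight-$2$ element $\qme$ playing the role of $E_2$ (constructed in the proof of Theorem \ref{thm1}), so that $\cyqmfs = \cymfs[\qme]$. For any CY modular forms $f, g \in \cymfs$ of weights $k, l \in \Z$, the RC bracket $[f,g]_{\rcdo, n}$ coming from the standard structure can be expanded as a polynomial in $\qme$ with coefficients in $\cymfs$. The algebraic computations carried out in \cite{younes3} for the case $k, l > 0$ then show that after cancellation all occurrences of $\qme$ vanish, so that $[f,g]_{\rcdo, n} \in \cymfs$; crucially, these computations depend only on the formal RC bracket axioms and the relation $\cyqmfs = \cymfs[\qme]$, not on any sign condition on the weights themselves.

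The main obstacle in \cite{younes3} was that the Cohen--Kuznetsov series --- the technical heart of the construction of the ambient standard RC algebra --- failed to make sense for elements of non-positive integer grading, because of the factorial-type denominators appearing in its coefficients. Theorem \ref{thm1} is precisely the result that circumvents this difficulty for canonical RC algebras of type $\frac{1}{N}\Z$. Once this single barrier is removed, the identities used in the proof of \cite[Theorem 1.2]{younes3} extend word-for-word to all weights $k, l \in \Z$, and the desired closure of $\cymfs$ under the RC brackets of $\cyqmfs$ follows.
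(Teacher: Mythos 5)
Your proposal is correct and follows essentially the same route as the paper, which proves Theorem \ref{thm3} simply by noting that the only obstruction in \cite[Theorem 1.2]{younes3} was the restriction of the Cohen--Kuznetsov/embedding argument to positive weights, now removed by Theorem \ref{thm1}: once $\cymfs$ is a sub-RC algebra of the standard RC algebra $\cyqmfs=\cymfs[\qme]$ with $[\cdot,\cdot]_{\rcdo,\ast}|_{\cymfs\otimes\cymfs}=[\cdot,\cdot]_{\rstdo,\ew4,\ast}$, the closure is immediate since the canonical bracket lands in $\cymfs$ by construction. (One small slip: the canonical RC algebra to which Theorem \ref{thm1} is applied is $\cymfs$, not $\cyqmfs$; the latter is the ambient standard RC algebra.)
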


Theorem \ref{thm3} is, in fact, an analogous result to the aforementioned property  in the theory of quasi-modular forms proved by Cohen, which states that the space of modular forms is closed under the RC bracket $[\cdot, \cdot]_\ast$ defined on the space of quasi-modular forms.

The author and Bogo in \cite{bn23} also refer to the results of the present paper to establish their contributions. One of the main results of that work is of crucial significance for this paper as well, as it asserts that the converse of Theorem \ref{thm1} is, in some sense, valid. Therefore, we state it here without proof, referring readers to \cite{bn23} for further details. In this theorem, the weight operator (derivation) $\rdo$ refers to the operator that multiplies any homogeneous element $f$ of degree $k$ by its degree, i.e., $\rdo f = k f$.

\begin{theo} \label{thm4}
{\bf  (\cite[Theorem 2.2]{bn23})}
A finitely generated RC algebra $(M,[\,,]_\ast)$ is a canonical RC algebra if and only if it is a sub-RC algebra of a standard RC algebra $(\widetilde{M}:=M[\qme],[\cdot,\cdot]_{\rcdo,\ast})$ with the following property: the derivation $\rcdo$, along with the weight operator $\rdo$ and a derivation~$\cdo$ satisfying~$\ker  \cdo=M$, endows $\widetilde{M}$ with an $\sl2$-module structure, i.e., the Lie bracket relations $[\rcdo,\cdo]=\rdo,\ [\rdo,\rcdo]=2\rcdo,\ [\rdo,\cdo]=-2\cdo$ hold.
\end{theo}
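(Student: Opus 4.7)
My plan is to prove the two implications separately, bootstrapping the forward direction with Theorem~\ref{thm1} and extracting the canonical structure from the $\sl2$ relations in the reverse direction.

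\textbf{Forward direction ($\Rightarrow$).} Assume $(M,[\,,]_\ast)$ is a canonical RC algebra. By Theorem~\ref{thm1}, together with the explicit construction in Theorem~\ref{thm main1}, there is a standard RC algebra $\widetilde{M}=M[\qme]$ and a derivation $\rcdo$ with $[\cdot,\cdot]_{\rcdo,\ast}|_{M\otimes M}=[\,,]_\ast$. It remains to produce the lowering derivation $\cdo$ and verify the three Lie bracket identities. I would define $\cdo$ as the unique derivation on the polynomial ring $M[\qme]$ satisfying $\cdo|_M\equiv 0$ and $\cdo\qme=c$ for an appropriate constant $c\in M_0$ fixed by normalization; by construction $\ker\cdo=M$. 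Since $\qme$ has weight $2$, $\cdo$ is homogeneous of degree $-2$ and $\rcdo$ is homogeneous of degree $+2$, so $[\rdo,\rcdo]=2\rcdo$ and $[\rdo,\cdo]=-2\cdo$ are immediate from the grading. The nontrivial relation $[\rcdo,\cdo]=\rdo$ I would verify on generators: write $\rcdo f$ for $f\in M_k$ as a $\qme$-free piece plus a piece proportional to $\qme$ (using the explicit formula for $\rcdo$ from Theorem~\ref{thm main1}), and $\rcdo\qme=\alpha\qme^2+\phi$ with $\phi\in M_4$; choosing $c$ and $\alpha$ compatibly reduces $[\rcdo,\cdo]$ to multiplication by the weight, first on $\qme$ and then on arbitrary $f\in M_k$.

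\textbf{Reverse direction ($\Leftarrow$).} Now assume $M$ is a sub-RC algebra of a standard RC algebra $\widetilde{M}=M[\qme]$ equipped with derivations $\rcdo,\cdo$ satisfying the $\sl2$ relations and $\ker\cdo=M$. Because $\cdo$ is a derivation, its kernel $M$ is automatically a graded subalgebra, and because $[\rdo,\cdo]=-2\cdo$, the element $\qme$ can be taken of weight $2$ with $\cdo\qme=c\in M_0$. Applying $[\rcdo,\cdo]=\rdo$ to $\qme$ rigidly pins down the form of $\rcdo\qme$: writing $\rcdo\qme=\alpha\qme^2+\phi$ with $\phi\in M_4$, the identity fixes $\alpha$ in terms of $c$. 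Applying the same identity to $f\in M_k$ (using $\cdo f=0$) determines the $\qme$-linear coefficient of $\rcdo f$ as an explicit multiple of $f$, so that \[\rstdo f:=\rcdo f-\tfrac{k}{?}\qme f\in M\] for a normalization constant read off from $c$ and $\alpha$. Then $\rstdo$ is a derivation on $M$ and $\phi$ plays the role of the canonical weight-$4$ element. Finally, I would expand the standard bracket $[f,g]_{\rcdo,n}$ for $f,g\in M$ by substituting $\rcdo=\rstdo+(\text{multiplication involving }\qme)$ throughout each iterated $\rcdo^j$; since the bracket lies in $M=\ker\cdo$, the $\qme$-containing contributions must reassemble into the canonical bracket built from $\rstdo$ and $\phi$, thereby identifying $[\,,]_\ast$ as a canonical RC bracket.

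\textbf{Main obstacle.} The genuinely heavy step is the combinatorial reorganization in the reverse direction: expanding $\rcdo^j f$ produces a sum involving $\rstdo^i f$, powers of $\qme$, and derivatives of $\qme$, and the resulting double sum inside $[f,g]_{\rcdo,n}$ must collapse onto a single canonical bracket. The required binomial identities are of the same nature as those in Zagier's construction of the Rankin--Cohen brackets and as those appearing in the proof of Theorem~\ref{thm2}; they are what converts the $\sl2$ relations into the canonical structure. The forward direction is comparatively routine once Theorem~\ref{thm1} supplies the ambient standard algebra, since it reduces to checking the three bracket identities on a set of generators.
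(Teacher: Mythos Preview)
The paper does not prove Theorem~\ref{thm4}: it is quoted from \cite{bn23} and explicitly ``state[d] here without proof.'' The only hint the paper gives about the argument in \cite{bn23} is that Theorem~\ref{thm5} (the RRC-system characterization of finitely generated canonical RC algebras) ``is a key tool in the proof of Theorem~\ref{thm4}''; concretely, one shows that the $\sl2$ data force $\rcdo$ to have the shape \eqref{eq rrc system}, and then reads off $\qme=\vs_1$, $\ew4=\prs_1$, $\partial\vs_j=\prs_j$ together with $\cdo=-\partial/\partial\vs_1$. Your forward direction is correct and matches this: with $\rcdo$ as in \eqref{eq der zag}, the choice $\cdo=-\partial/\partial\qme$ (so $c=-1$) gives exactly the $\sl2$ triple displayed after Theorem~\ref{thm5}, and the bracket checks on generators go through as you describe.

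Your diagnosis of the reverse direction, however, misplaces the difficulty. The ``combinatorial reorganization'' you flag as the main obstacle---rewriting $[f,g]_{\rcdo,n}$ in terms of $\partial$ and $\ew4$ once $\rcdo$ has the form \eqref{eq der zag}---is precisely Theorem~\ref{thm main1}, already proved in \S\ref{section proof 1} via the Cohen--Kuznetsov series; nothing new is needed there. The step you pass over is the one that actually requires work: establishing that $\rcdo$ \emph{has} the form \eqref{eq der zag}. You assert that $\cdo\qme=c\in M_0$, but a priori $\cdo\qme$ lies only in $\widetilde{M}_0=\bigoplus_{i\ge0}M_{-2i}\,\qme^{i}$, which in the $\frac1N\Z$-graded setting can be strictly larger than $M_0$. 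One must use $[\rcdo,\cdo]=\rdo$ on a nonconstant $f\in M_k$ to force $\deg_{\qme}(\cdo\qme)=0$ and $\deg_{\qme}(\rcdo f)\le 1$, and then use $[\rcdo,\cdo]=\rdo$ on $\qme$ itself to see that $\cdo\qme$ is a \emph{unit} in $M_0$ (its inverse appears as the $\qme^2$-coefficient of $\rcdo\qme$), after which a rescaling of $\qme$ normalizes $\cdo\qme=-1$ and yields exactly \eqref{eq der zag}. This normalization argument, packaged in \cite{bn23} through the RRC presentation of Theorem~\ref{thm5}, is where ``finitely generated'' (and implicitly an integrality hypothesis on $M$) enters; filling it in, rather than the bracket combinatorics, is the substantive content of the reverse implication.
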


Recall that in the theory of quasi-modular forms for $\SL2$, the usual derivation $\rcdomf$, together with the weight operator  $\rdo$ and $\cdo=-12\frac{\partial }{\partial E_2}$, endows $\qmfs(\SL2)$ with an $\sl2$-module structure. A similar result holds for the space of quasi-modular forms for any non-cocompact Fuchsian subgroup $\Gamma\subset {\rm PSL}_2(\R)$ (See Section \ref{section sl2}).

In \cite{younes5}, the author applied Theorem \ref{thm2} to derive the relations \cite[(7.20) and (7.31)]{younes5}, from which several interesting congruences for Ramanujan-type tau functions in $\Gamma_0(2)$ and $\Gamma_0(3)$ were deduced. For further details, see \cite[\S 7.3 and \S 7.4]{younes5}.

\begin{rem}
The main results of this work hold when the underlying graded algebra is of type $\Q$, and the same proofs apply.
\end{rem}

The structure of this paper is as follows. In Section \ref{section NT}, we introduce our notations and terminology. Section \ref{section RCa} is dedicated to establishing the main definitions and presenting a more detailed version of the first main result, Theorem \ref{thm1}. In Section \ref{section CC series}, we prepare the key components of the proof of Theorem \ref{thm1}, including the Cohen-Kuznetsov series and several important technical lemmas. Section \ref{section proof 1} presents the proof of Theorem \ref{thm1} (or equivalently, Theorem \ref{thm main1}). Section \ref{section ctlII} addresses additional significant technical lemmas, which will be used to prove Theorem \ref{thm2} in Section \ref{section proof 2}. Finally, in Section \ref{section sl2}, we explore the relationship between the converse of Theorem \ref{thm1}, the Lie algebra $\sl2$, and Ramanujan systems of RC type. We also present a non-classical example in this section, and conclude it by noting that any finitely generated RC algebra containing a homogeneous element that is not a zero divisor can be viewed as a sub-RC algebra of a standard RC algebra.


\section{Notations and terminologies} \label{section NT}

Let us first establish the following notations and terminology, which will be used throughout.
\begin{description}
  \item[$\bullet$] $\qmfs(\Gamma)$ and $\mfs(\Gamma)$ refer to the graded algebra of quasi-modular forms and modular forms on a Fuchsian group $\Gamma\subset {\rm PSL}_2(\R)$,  respectively. Specifically, $\qmfs(\Gamma)=\bigoplus_{k=0}^\infty\qmfs_k(\Gamma)$ and $\mfs(\Gamma)=\bigoplus_{k=0}^\infty\mfs_k(\Gamma)$, where $\qmfs_k(\Gamma)$ and $\mfs_k(\Gamma)$ represent the spaces of quasi-modular forms and modular forms of weight $k$ on $\Gamma$, respectively.
\item[$\bullet$] For $\Gamma=\SL2$, it is known that $\qmfs(\SL2)=\C[E_2,E_4,E_6]$ and $\mfs(\SL2)=\C[E_4,E_6]$, where $E_4$ and $E_6$ are modular forms (Eisenstein series) of weights $4$ and $6$, respectively.
  \item[$\bullet$] The derivation $\rcdomf = '$ refers to the normalized usual derivation of quasi-modular forms.
Let us illustrate this in the case where $\Gamma$ is a subgroup of finite index in $\SL2$ and { \tiny $\left( {\begin{array}{*{20}c}
   {1} & {0}   \\
   { 0}  & {1}   \\
\end{array}} \right)$}$\in \Gamma$. In this setting, any quasi-modular form $f$ admits a $q$-expansion (Fourier series) of the form
\[
f(\tau) = f(q) = \sum_{n=0}^{\infty} a_n(f) q^n,
\]
where $a_n(f) \in \mathbb{C}$, $\tau \in \mathbb{H} := \{z \in \mathbb{C} \mid \mathrm{Im}(z) > 0\}$, and $q = e^{2\pi i \tau}$.
Then, the derivation satisfies the relation:
\begin{equation} \label{eq der of mf}
  \rcdomf f = f' = \frac{1}{2\pi i} \frac{df}{d\tau} = q \frac{df}{dq}.
\end{equation}
  \item[$\bullet$] $\k$ denotes a field of characteristic zero.
  \item[$\bullet$] A graded algebra of type $\zn$ refers to a commutative, associative graded algebra with unit of type $\zn$ over $\k$, denoted by $M =\bigoplus_{k\in \zn}M_k$, satisfying $\dim_\k M_k<\infty$, for all $k\in \zn$. Here, $M_k$ consists of homogeneous elements of degree $k$ in $M$, and for any $f \in M_k$, we simply say that $f$ is an element of $M$ of degree $k$. We also use the notation $\widetilde{M} = \bigoplus_{k \in \zn} \widetilde{M}_k$ for a graded algebra of type $\zn$, where $M \subset \widetilde{M}$. Recall that a graded algebra is connected if $M_0 = \k \cdot 1$. For example, $\qmfs(\SL2)$ and $\mfs(\SL2)$ are connected graded algebras of type $\Z_{\geq 0}$.
  \item[$\bullet$] A derivation $\rcdo$ on the graded algebra $M=\bigoplus_{k\in \zn}M_k$ is said to be of degree $m\in \Z$ if $\rcdo(M_k)\subseteq M_{k+m}$, for all $k\in \zn$, which will be denoted by $\rcdo: M_\ast \to M_{\ast+m}$. Furthermore,  $\rcdo^jf$ denotes the $j$th derivative of $f\in M$ with respect to the derivation $\rcdo$. For instance, $\rcdomf$ is a degree-2 derivation on $\qmfs(\SL2)$.
\end{description}


\section{Rankin-Cohen algebra} \label{section RCa}
For all $n\in \Z_{\geq 0}$ and for any two modular forms $f\in \qmfs_{k}(\Gamma)$ and $g\in \qmfs_{l}(\Gamma)$, where $k, l\in \Z_{\geq 0}$, the $n$th Rankin-Cohen (RC) bracket $[f,g]_n$ is defined as the following bilinear operator:
\begin{align}\label{eq rcb mf}
  [f,g]_n:&=\sum_{i+j=n}(-1)^j\binom{n+k-1}{i}\binom{n+l-1}{j}f^{(j)}g^{(i)}\\
  &=\sum_{j=0}^n(-1)^j\frac{(k+j)_{n-j}(l+n-j)_j}{j!(n-j)!} f^{(j)}g^{(n-j)}  \nonumber \,,
\end{align}
where $f^{(j)}$ refers to the $j$th derivative of the (quasi-)modular form $f$, $(x)_0=1$, and  $(x)_n:=x(x+1)\ldots (x+n-1)$ is the Pochhammer symbol. Cohen \cite{coh75} showed that if $f\in \mfs_{k}(\Gamma)$ and $g\in \mfs_{l}(\Gamma)$, then $[f,g]_n\in \mfs_{k+l+2n}(\Gamma)$.

In \cite{zag94}, Zagier listed 11 properties of the RC brackets of modular forms, including the following:
\begin{align*}
  &[f,g]_n =(-1)^n[g,f]_n\,, \ \ \forall \, n\geq 0\,,\\  
  &[[f,g]_0,h]_0 =[f,[g,h]_0]_0\,, \\  
  &[f,1]_0 =[1,f]_0=f\,, \ \  [f,1]_n =[1,f]_n=0\,, \ \forall \,n>0\,,\\ 
  &[[f,g]_1,h]_1+[[g,h]_1,f]_1+[[h,f]_1,g]_1=0\,, 
\end{align*}
in which $f\in \mfs_{k}(\Gamma)$, $g\in \mfs_{l}(\Gamma)$ and $h\in \mfs_{m}(\Gamma)$, with $k, l, m\in \Z_{\geq 0}$.
Zagier also  defined the algebraic RC structures on a commutative, associative graded algebra with unit $M =\bigoplus_{k\geq 0}M_k$ of type $\Z_{\geq 0}$ over the field $\k$ satisfying $M_0=\k.1$ and $\dim_\k M_k<\infty$, for all $k\in \Z_{\geq 0}$. We extend his definition to the graded algebras of type $\zn$.

\begin{defi} \label{defi rca}
An \emph{RC algebra} is a pair $(M,[\cdot,\cdot]_\ast)$, where:
\begin{itemize}
  \item $M =\bigoplus_{k\in \zn}M_k$ is a graded algebra of type $\zn$ over $\k$;
  \item $[\cdot,\cdot]_\ast$ denotes a family of bilinear operations $[ \ , \ ]_n:M_k\otimes M_l\to M_{k+l+2n}$ for $k, l\in \zn$ and $n\geq 0$, satisfying all the algebraic identities associated with the RC brackets of modular forms given in \eqref{eq rcb mf}.
\end{itemize}
\end{defi}

Note that $M_0$ is not necessarily composed solely of constant elements (i.e., elements of the field $\k$). Furthermore, in verifying the algebraic identities, the degree of elements in $M$ may vary within $\zn$.

Regarding Definition \ref{defi rca}, Zagier, in \cite[\S 5]{zag94}, states:
\begin{quote}
``\ldots this may seem like a strange definition, since
we do not know how to give a complete set of axioms. Nevertheless, we will be able
to construct examples and, to a large extent, to clarify the structure of these objects. The situation should be thought of as analogous to building up the theory of Lie
algebras \ldots. In the same way, we will start by considering RC algebras which are subspaces
closed under all bracket operations of some standard examples, and then show that,
under some general hypotheses, all RC algebras in fact arise in this way.''
\end{quote}
We conclude this paper in Section \ref{section sl2} by observing that any finitely generated RC algebra that contains a homogeneous element, which is not a zero divisor, can be regarded as a sub-RC algebra of a standard RC algebra.

In the following we give the definition of standard RC algebras, which in the case of graded algebras of type $\Z_{\geq 0}$ was first treated in \cite{zag94}, and are basic examples of RC algebras.

\begin{defi} \label{defi srca}
An RC algebra $\big(\widetilde{M},[\cdot,\cdot]_\ast\big)$, where $\widetilde{M}=\bigoplus_{k\in \frac{1}{N}\Z}\widetilde{M}_k$ is a graded algebra of type $\frac{1}{N}\Z$, is called a \emph{standard RC algebra} if there exists a degree-2 derivation $\rcdo: \widetilde{M}_\ast \to \widetilde{M}_{\ast+2}$ on $\widetilde{M}$ such that $[\cdot,\cdot]_n=[\cdot,\cdot]_{\rcdo,n}$, for all $n\geq 0$, in which, for given $f\in \widetilde{M}_k$ and $g\in \widetilde{M}_l$, with $k,l\in \zn$, the bracket $[f,g]_{\rcdo,n}$ is defined by:
\begin{equation}\label{eq srcb}
  [f,g]_{\rcdo,n}:=\sum_{j=0}^n(-1)^j\frac{(k+j)_{n-j}(l+n-j)_j}{j!(n-j)!} \rcdo^{j}f \rcdo^{n-j}g.
\end{equation}
\end{defi}

In the RC brackets \eqref{eq rcb mf} of the graded algebra $\mfs(\Gamma)$ of the modular forms, the usual derivation $\rcdomf$ given in \eqref{eq der of mf} is applied, i.e., $[\cdot , \cdot]_{\ast}=[\cdot , \cdot]_{\rcdomf,\ast}$. However, $(\mfs(\Gamma),[\cdot , \cdot]_{\rcdomf,\ast})$ does not constitute a standard RC algebra because $\mfs(\Gamma)$ is not closed under the derivation $\rcdomf$. Instead, it forms a sub-RC algebra of the standard RC algebra $(\qmfs(\Gamma),[\cdot , \cdot]_{\rcdomf,\ast})$. Moreover, its RC brackets can be revisited through a bilinear operator using the Serre derivation. To elucidate these concepts further, we consider the specific case $\Gamma=\SL2$. It is well known that $\qmfs(\SL2)=\C[E_2,E_4,E_6]$ and $\mfs(\SL2)=\C[E_4,E_6]$, where  $E_2,E_4,E_6$ are Eisenstein series. Specifically, $E_2$ is a quasi-modular form of weight 2, while $E_4$ and $E_6$ are modular forms of weights 4 and 6, respectively. As we will see in Section \ref{section sl2}, the derivation $\rcdomf$ can be expressed for any $f\in \qmfs_k(\SL2)$, with $k\in \Z_{\geq 0}$, as follows:
\begin{equation}\label{eq qmf der}
  \rcdomf f=f'=\frac{E_2^2-E_4}{12}\frac{\partial f}{\partial E_2}+\frac{E_2E_4-E_6}{3}\frac{\partial f }{\partial E_4}+\frac{E_2E_6-E_4^2}{2}\frac{\partial f}{\partial E_6}.
\end{equation}
We observe that $\rcdomf$ is a degree-2 derivation, hence $(\qmfs(\SL2),[\cdot , \cdot]_{\rcdomf,\ast})$ forms a standard RC algebra.  From  \eqref{eq qmf der} it is easily seen that if $f$ is a modular form, then $\rcdomf f$ is not necessarily a modular form. Thus,  $(\mfs(\SL2),[\cdot , \cdot]_{\rcdomf,\ast})$ is not a standard RC algebra, but it is a sub-RC algebra of the standard RC algebra $(\qmfs(\SL2),[\cdot , \cdot]_{\rcdomf,\ast})$. If, instead of $\rcdomf$, we consider the Serre derivation $\rsdo$, which for all $f\in \mfs_k(\SL2), \ k\in \Z_{\geq 0}$, is defined by:
\begin{equation}\label{eq rsd}
  \rsdo f=f'-\frac{1}{12}kE_2f=-\frac{E_6}{3}\frac{\partial f}{\partial E_4}-\frac{E_4^2}{2}\frac{\partial f}{\partial E_6}\in \mfs_{k+2}(\SL2)\,,
\end{equation}
then we observe that $\mfs(\SL2)$ is closed under the degree-2 derivation $\rsdo$. Although we can construct the standard RC algebra $\big(\mfs(\SL2),[\cdot,\cdot]_{\rsdo,\ast}\big)$,  the RC bracket of modular forms defined in \eqref{eq rcb mf} differs from the RC bracket defined by $[\cdot,\cdot]_{\rsdo,\ast}$.  However, one can recover the RC bracket in \eqref{eq rcb mf} using the Serre derivation through the bilinear operator $[\cdot , \cdot]_{\rsdo,\frac{1}{144}E_4,\ast}$, which,  more generally, is defined bellow.

\begin{defi}\label{defi crca}
A \emph{canonical RC algebra} is a pair $(M,[\cdot , \cdot]_{\rstdo,\ew4,\ast})$, where:
\begin{itemize}
   \item $M=\bigoplus_{k\in \zn} M_k$ is a graded algebra of type $\zn$, equipped with a degree-2 derivation $\rstdo: M_\ast \to M_{\ast+2}$ and containing a degree-4 element $\ew4\in M_4$;
   \item $[\cdot , \cdot]_{\rstdo,\ew4,\ast}$ is a family of bilinear operations:
   \[
   [\cdot,\cdot]_{\rstdo,\ew4,n}:M_{k}\otimes M_{l} \to M_{k+l+2n}, \quad \text{for all } k,l\in \zn \text{ and } n\geq 0,
   \]
   defined for any $f\in M_k$ and $g\in M_l$ as:
   \begin{equation}\label{eq crcb}
   [f,g]_{\rstdo,\ew4,n}:=\sum_{j=0}^n(-1)^j\frac{(k+j)_{n-j}(l+n-j)_j}{j!(n-j)!} \rstdo_{(j)}f \rstdo_{(n-j)}g \ ,
   \end{equation}
   where, for all $j\geq 0$ and $h\in M_k$, the operator $\rstdo_{(j)}$ is recursively defined as:
   \begin{equation}\label{eq f_(i)}
   \rstdo_{(0)}h:=h, \quad \rstdo_{(1)}h:=\rstdo h, \quad \rstdo_{(j+1)}h:=\rstdo(\rstdo_{(j)}h) + j(j+k-1)\ew4 \rstdo_{(j-1)}h.
   \end{equation}
\end{itemize}
We refer to $\rstdo$ as the \emph{Serre-type derivation}.
\end{defi}

The canonical RC algebraic structure was first considered by Zagier, who proved in \cite[Proposition~1]{zag94} that when $M$ is a connected graded algebra of type $\Z_{\geq 0}$, any canonical RC algebraic structure on $M$ induces an RC structure. To demonstrate this, he embedded the canonical RC algebra $(M,[\cdot , \cdot]_{\partial,\Lambda,\ast})$ into a standard RC algebra $(\widetilde{M},[\cdot,\cdot]_{\rcdo,\ast})$ for a larger graded algebra $\widetilde{M}$, equipped with a degree-2 derivation $\rcdo$, such that $M\subseteq \widetilde{M}$ and $[\cdot,\cdot]_{\rcdo,\ast}|_{M\otimes M}=[\cdot,\cdot]_{\partial,\Lambda,\ast}$.

In particular, in the classical example of the modular forms on $\SL2$, we observe that $\mfs(\SL2)$ can be embedded in $\qmfs(\SL2)=\mfs(\SL2)[E_2]$, and the bracket structures are related as $[\cdot , \cdot]_{\rcdomf,\ast}|_{\mfs(\SL2)\otimes \mfs(\SL2)}=[\cdot , \cdot]_{\rsdo,\frac{1}{144}E_4,\ast}$.

\begin{theo}\label{thm main1}
  Let $(M,[\cdot , \cdot]_{\partial,\ew4,\ast})$ be a canonical RC algebra. Consider $\widetilde{M}:=M[\qme]=M\otimes_{\k} \k[\qme]$,
where $\qme\notin M$ is an element of degree $2$. Define the degree-$2$ derivation
$\rcdo:\widetilde{M}_\ast\to \widetilde{M}_{\ast+2}$ on the generators of $\widetilde{M}$ by:
\begin{equation}\label{eq der zag}
  \rcdo(f) := \partial(f) + k\qme f \in \widetilde{M}_{k+2}, \quad \text{for any } f\in M_k, \quad \text{and} \quad \rcdo(\qme) := \ew4 + \qme^2 \in \widetilde{M}_4,
\end{equation}
Then, for any $n\geq 0$ and any $f\in M_k$, $g\in M_l$ with $k,l\in\zn$, we have:
\begin{equation}\label{eq []=[]}
  [f,g]_{\partial,\ew4,n} = [f,g]_{\rcdo,n}.
\end{equation}
\end{theo}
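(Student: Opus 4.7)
The plan is to encode the derivatives of elements of $M$ into formal generating series that convert the bracket identity \eqref{eq []=[]} into a single multiplicative relation. For each $f \in M_k$ with $k \in \zn$, introduce the formal \emph{Cohen--Kuznetsov series}
\[
\mathcal{K}_\rcdo(f;X) := \sum_{n\geq 0} \frac{\rcdo^n f}{n!\,(k)_n}\, X^n \in \widetilde{M}[[X]], \qquad \mathcal{K}_\partial(f;X) := \sum_{n\geq 0} \frac{\partial_{(n)} f}{n!\,(k)_n}\, X^n \in M[[X]].
\]
Using the factorization $(k)_n = (k)_j (k+j)_{n-j}$ (and the analogous one for $l$), a direct expansion yields
\[
\mathcal{K}_\rcdo(f;-X)\,\mathcal{K}_\rcdo(g;X) = \sum_{n\geq 0} \frac{[f,g]_{\rcdo,n}}{(k)_n(l)_n}\, X^n, \qquad \mathcal{K}_\partial(f;-X)\,\mathcal{K}_\partial(g;X) = \sum_{n\geq 0} \frac{[f,g]_{\partial,\ew4,n}}{(k)_n(l)_n}\, X^n.
\]
Hence proving \eqref{eq []=[]} for all $n$ reduces to the single generating-function identity $\mathcal{K}_\rcdo(f;-X)\mathcal{K}_\rcdo(g;X) = \mathcal{K}_\partial(f;-X)\mathcal{K}_\partial(g;X)$.

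The core step will be the \emph{transfer identity}
\[
\mathcal{K}_\rcdo(f;X) = e^{\qme X}\, \mathcal{K}_\partial(f;X),
\]
valid for every $f \in M_k$, from which the previous equality follows at once because $e^{-\qme X}\,e^{\qme X} = 1$. Reading off the coefficient of $X^n$, the transfer identity is equivalent to the closed formula
\[
\rcdo^n f = \sum_{i=0}^n \binom{n}{i}\,(k+i)_{n-i}\, \qme^{n-i}\, \partial_{(i)} f,
\]
which I establish by induction on $n$. The cases $n=0,1$ are immediate from \eqref{eq der zag}. For the inductive step, apply $\rcdo$ to the right-hand side using the Leibniz rule and three ingredients: (i) $\rcdo(\qme^{n-i}) = (n-i)\,\qme^{n-i-1}(\ew4 + \qme^2)$, from \eqref{eq der zag}; (ii) $\rcdo(\partial_{(i)} f) = \partial(\partial_{(i)} f) + (k+2i)\,\qme\,\partial_{(i)} f$, since $\partial_{(i)} f \in M_{k+2i}$; and (iii) the rewriting $\partial(\partial_{(i)} f) = \partial_{(i+1)} f - i(i+k-1)\,\ew4\,\partial_{(i-1)} f$, obtained by solving the recursion \eqref{eq f_(i)} for $\partial(\partial_{(i)} f)$. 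After an index shift and the elementary identity $(n-j+1)\binom{n}{j-1} = j\binom{n}{j}$, the two families of $\ew4$-terms (coming from (i) and from (iii)) cancel exactly, and the remaining $\qme$-weighted terms reassemble into the same formula with $n+1$ in place of $n$.

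The main obstacle, flagged in the excerpt just after Theorem \ref{thm1}, is that the weight $k \in \zn$ may be a non-positive integer, in which case $(k)_n$ can vanish and the series $\mathcal{K}_\rcdo(f;X)$ and $\mathcal{K}_\partial(f;X)$ are not literally well-defined. To bypass this, I treat the closed formula for $\rcdo^n f$ above as the fundamental assertion: it is a genuine identity in $\widetilde{M}$ whose coefficients are polynomials in $k$, so it makes sense for every $k \in \zn$ regardless of whether $(k)_n$ vanishes. The generating-function presentation is then used purely as bookkeeping; after clearing the denominator $(k)_n(l)_n$, the bracket identity \eqref{eq []=[]} becomes a polynomial identity in $k$ and $l$ that follows directly from the closed formula and a combinatorial manipulation of the products. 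Making this careful is the role of the technical lemmas collected in Section \ref{section CC series}.
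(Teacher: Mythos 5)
Your overall architecture coincides with the paper's: the closed formula $\rcdo^n f=\sum_{i}\binom{n}{i}(k+i)_{n-i}\qme^{n-i}\rstdo_{(i)}f$ is exactly the inverse of Lemma \ref{lemm f_(k)} (which expresses $\rstdo_{(n)}f$ in terms of the $\rcdo^{j}f$), your transfer identity is Lemma \ref{lemm exp(qme)CK}, and the induction you outline is the same computation, valid for every $k\in\zn$ since no division occurs. For $k,l\notin\Z_{\leq 0}$ your generating-function product is the paper's argument essentially verbatim, up to the harmless renormalization $(k)_n$ versus $(k+n-1)!$.

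The gap is in the step you defer to ``clearing the denominator'': when $k$ or $l$ is a non-positive integer this does not work as stated, and it is precisely the point the paper identifies as the main difficulty. If $k\in\Z_{\leq 0}$, then $(k)_j=0$ for all $j\geq -k+1$, so the series $\mathcal{K}_{\rcdo}(f;X)$ is not defined at all (its coefficients involve division by zero); and even after rescaling, the coefficient of $X^n$ in the product determines $[f,g]_{\rcdo,n}$ only up to the factor $1/\big((k)_n(l)_n\big)$, which vanishes or is undefined exactly in the problematic range, so the generating-function identity is vacuous in those degrees. Your fallback --- that the identity ``is a polynomial identity in $k$ and $l$'' --- cannot be invoked inside the fixed algebra $\widetilde{M}$: the element $f$ has one weight, and $\rstdo_{(i)}f$ is defined by a recursion that uses that weight, so there is no meaningful way to ``vary $k$''. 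To make polynomiality work you would need to pass to a universal free algebra over $\k[k,l]$ on formal symbols $\rstdo_{(i)}f$, $\rstdo_{(i')}g$, $\qme$, $\ew4$, in which the monomials $\qme^{m}\rstdo_{(i)}f\,\rstdo_{(i')}g$ are linearly independent, prove the coefficient identities there for generic weights, and then specialize; that would be a legitimate (and arguably cleaner) alternative, but it is a construction you have not supplied. The paper instead renormalizes the series for $k\in\Z_{\leq 0}$, splitting $CK_{\rcdo}(f;X)$ into a polynomial part $CK_{\rcdo}^{-}$ with coefficients $(-1)^n(-k-n)!/n!$ and a tail $CK_{\rcdo}^{+}$, proves a corrected transfer identity with the error terms $A_n(\qme,f)$ of Lemma \ref{lemm exp(qme)CK}, and then runs a four-case analysis on $n$ (Remark \ref{rem []_rcdo,n} and Section \ref{section proof 1}); note in particular that for $l\leq k<0$ the bracket $[f,g]_{\rcdo,n}$ itself degenerates (it vanishes identically for $-l+1\leq n\leq -k-l+1$ and loses terms in the adjacent ranges), a phenomenon your sketch never confronts. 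As written, the proposal establishes the theorem only for weights outside $\Z_{\leq 0}$.
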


Note that this theorem provides a more detailed version of Theorem \ref{thm1}. More precisely, the derivation $\rcdo$ given in \eqref{eq der zag} induces the standard RC algebra $(\widetilde{M},[\cdot,\cdot]_{\rcdo,\ast})$, which contains the canonical RC algebra $(M,[\cdot , \cdot]_{\partial,\ew4,\ast})$ as a sub-RC algebra. Furthermore, an immediate corollary of Theorem \ref{thm main1} establishes that every canonical RC algebra is also an RC algebra.

\begin{cor}
Any canonical RC algebra $(M,[\cdot , \cdot]_{\partial,\ew4,\ast})$ is an RC algebra.
\end{cor}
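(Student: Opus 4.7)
The plan is to deduce the corollary directly from Theorem~\ref{thm main1}, using that standard RC algebras are, by design, RC algebras. Concretely, given a canonical RC algebra $(M,[\cdot,\cdot]_{\partial,\ew4,\ast})$, I first invoke Theorem~\ref{thm main1} to embed $M$ as a graded subspace of $\widetilde{M}=M[\qme]$, endowed with the degree-$2$ derivation $\rcdo$ of \eqref{eq der zag}. This yields the standard RC algebra $(\widetilde{M},[\cdot,\cdot]_{\rcdo,\ast})$, and identity \eqref{eq []=[]} asserts that the canonical brackets on $M$ agree with the restriction of the standard brackets on $\widetilde{M}$ to $M\otimes M$.

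Next I would observe that $(\widetilde{M},[\cdot,\cdot]_{\rcdo,\ast})$ satisfies the algebraic identities required of an RC algebra in Definition~\ref{defi rca}. These identities --- symmetry up to the sign $(-1)^n$, associativity of the zeroth bracket, the unit property for $1\in \widetilde{M}_0$, the Jacobi-type identity for the first bracket, and the remaining relations alluded to by Zagier --- depend only on the Leibniz rule for $\rcdo$ and on the degree shifts imposed by the formula \eqref{eq srcb}, not on any analytic feature of modular forms. They are therefore purely formal consequences of having a degree-$2$ derivation on a commutative graded algebra, and hold for every standard RC algebra.

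Finally, the definition of a canonical RC algebra forces $[f,g]_{\partial,\ew4,n}\in M_{k+l+2n}\subseteq M$ for all $f\in M_k$ and $g\in M_l$. Combined with \eqref{eq []=[]}, this shows that $M$ is closed under all the standard brackets $[\cdot,\cdot]_{\rcdo,n}$ of $\widetilde{M}$. Any algebraic identity satisfied by $[\cdot,\cdot]_{\rcdo,\ast}$ on $\widetilde{M}\otimes\widetilde{M}$ then automatically restricts to the same identity on $M\otimes M$, so $(M,[\cdot,\cdot]_{\partial,\ew4,\ast})$ is a sub-RC algebra of $(\widetilde{M},[\cdot,\cdot]_{\rcdo,\ast})$ and in particular is itself an RC algebra.

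The only substantive obstacle in this chain is Theorem~\ref{thm main1}; once it is available, the corollary is essentially a formality. The secondary point requiring a small amount of care is that, as Zagier emphasises, a complete axiomatisation of RC algebras is not known, so ``RC algebra'' must be interpreted as satisfying the finite list of identities recorded in Section~\ref{section RCa}; verifying each of these for a standard RC algebra reduces to a direct calculation with \eqref{eq srcb} and the Leibniz rule, and carries over to $M$ by the closure argument above.
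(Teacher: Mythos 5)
Your proposal is correct and follows exactly the route the paper intends: the corollary is an immediate consequence of Theorem~\ref{thm main1}, since the identity \eqref{eq []=[]} exhibits $(M,[\cdot,\cdot]_{\partial,\ew4,\ast})$ as a sub-RC algebra of the standard RC algebra $(\widetilde{M},[\cdot,\cdot]_{\rcdo,\ast})$, whose brackets satisfy the required identities formally, and the grading condition in Definition~\ref{defi crca} gives closure of $M$ under the brackets. Your spelled-out version of the closure and restriction argument matches the paper's (implicit) reasoning.
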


To prove Theorem \ref{thm main1}, we require additional ingredients, the most significant being the Cohen-Kuznetsov series. In Section \ref{section CC series}, we develop these tools, and in Section \ref{section proof 1}, we present the proof.

\section{Cohen-Kuznetsov series} \label{section CC series}

The Cohen–Kuznetsov series were first introduced independently by Cohen \cite{coh75} and Kuznetsov \cite{kuz75}. Their original definition does not extend to non-positive integers. In this work, we generalize their definition to include non-positive integers as well. A similar definition can be found in \cite{wz22}, published in 2022. However, the author encountered this formulation independently in 2021 during a sabbatical at the Max Planck Institute for Mathematics in Bonn and presented it in a talk at the \emph{Number Theory Lunch Seminar} under the same title.

Note that for any non-negative rational number $k$, the factorial is defined as $k! := \Gamma(k+1)$, where $\Gamma$ denotes the Gamma function. Recall that $\Gamma(k+1) = k\Gamma(k)$, which leads to the property $k! = k \cdot (k-1)!$ for all non-negative rational numbers $k \neq 0$.

Throughout this section, $(\widetilde{M},[\cdot,\cdot]_{\rcdo,\ast})$, $(M,[\cdot , \cdot]_{\partial,\ew4,\ast})$, $\rcdo$, $\rstdo$, $\qme$ and $\ew4$ are considered as in the announcement of Theorem \ref{thm main1}.

\begin{defi}
 Let $f\in M_k$, with $k\in\zn$, and let $X$ be a free parameter. We define the \emph{Cohen--Kuznetsov} (CK) series $CK_\rcdo(f;X)$ and $CK_\rstdo(f;X)$, associated with the  derivations $\rcdo$ and $\rstdo$, respectively, as the following formal power series:
 \begin{description}
  \item {\bf (i)} if  $k\notin \Z_{\leq 0}$, then:
  \begin{align}
   & CK_\rcdo(f;X):=\sum_{n=0}^{+\infty} \frac{\rcdo^{n}f}{n!(k+n-1)!}X^n\,,\label{eq CK_D>0} \\
   & CK_\rstdo(f;X):=\sum_{n=0}^{+\infty} \frac{\rstdo_{(n)}f}{n!(k+n-1)!}X^n\,; \label{eq CK_rsdo>0}
  \end{align}
 \item {\bf (ii)} if $k\in \Z_{\leq 0}$, then:
 \begin{align}
   & CK_\rcdo(f;X):=CK_\rcdo^-(f;X)+CK_\rcdo^+(f;X)\,,\\
   & CK_\rstdo(f;X):=CK_\rstdo^-(f;X)+CK_\rstdo^+(f;X) \,,
  \end{align}
  where
  \begin{align}
   & CK_\rcdo^-(f;X):=\sum_{n=0}^{-k} \left( (-1)^n\frac{(-k-n)!}{n!}\rcdo^{n}f \right) X^n\,, \\
   & CK_\rcdo^+(f;X):=\sum_{n=-k+1}^{+\infty}\left( \frac{1}{n!(k+n-1)!}\rcdo^{n}f\right)X^n\,,  \\
   & CK_\rstdo^-(f;X):=\sum_{n=0}^{-k} \left( (-1)^n\frac{(-k-n)!}{n!}\rstdo_{(n)}f \right) X^n\,, \\
   & CK_\rstdo^+(f;X):=\sum_{n=-k+1}^{+\infty} \left( \frac{1}{n!(k+n-1)!}\rstdo_{(n)}f\right)X^n\,.
  \end{align}
 \end{description}
\end{defi}

The proof of Theorem \ref{thm main1} relies on several technical results, which are presented in the following subsection.

\subsection{Core technical lemmas I}\label{section ctlI}

 \begin{lemm} \label{lemm f_(k)}
 Let $f\in M_k$, with $k\in\zn$. Then, for any $n\in\Z_{\geq 0}$ we have:
 \begin{equation}\label{eq f_(k)}
  \rstdo_{(n)}f=\sum_{j=0}^n (-1)^{n-j}\frac{\ n!(k+j)_{n-j} \ }{(n-j)!j!}\qme^{n-j}\rcdo^jf\,.
 \end{equation}
\end{lemm}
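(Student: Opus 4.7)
The plan is to prove the identity by induction on $n$, essentially inverting the relation $\rcdo=\rstdo+(\text{weight})\,\qme\,\cdot$ given in~\eqref{eq der zag}. The base cases are immediate: for $n=0$ both sides equal $f$, and for $n=1$ the right-hand side collapses to $\rcdo f - k\qme f$, which equals $\rstdo f=\rstdo_{(1)}f$ by~\eqref{eq der zag}.

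For the inductive step I will assume~\eqref{eq f_(k)} for all values up to~$n$ and establish it for $n+1$. Starting from the recursion
\[
\rstdo_{(n+1)}f=\rstdo(\rstdo_{(n)}f)+n(n+k-1)\,\ew4\,\rstdo_{(n-1)}f,
\]
I will use that $\rstdo_{(n)}f$ is homogeneous of degree $k+2n$, so~\eqref{eq der zag} rewrites the first summand as $\rstdo(\rstdo_{(n)}f)=\rcdo(\rstdo_{(n)}f)-(k+2n)\,\qme\,\rstdo_{(n)}f$. I then substitute the inductive hypothesis for both $\rstdo_{(n)}f$ and $\rstdo_{(n-1)}f$, and expand $\rcdo(\rstdo_{(n)}f)$ via the Leibniz rule for the derivation $\rcdo$ together with $\rcdo(\qme)=\ew4+\qme^2$. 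The outcome is a $\k$-linear combination of monomials of two types, $\qme^m\rcdo^j f$ and $\ew4\,\qme^m\rcdo^j f$.

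The proof then reduces to two coefficient-matching tasks. First, the $\ew4$-terms must cancel. They come from the Leibniz rule applied to $\qme^{n-j}$ inside $\rcdo(\rstdo_{(n)}f)$ and from the last term of the recursion; using the telescoping identity $(k+j)_{n-j}=(k+n-1)(k+j)_{n-1-j}$, the two contributions to the coefficient of $\ew4\,\qme^{n-1-j}\rcdo^j f$ cancel exactly for $0\le j\le n-1$. Second, for each $j$ the coefficient of $\qme^{n+1-j}\rcdo^j f$ must equal the target $(-1)^{n+1-j}(n+1)!(k+j)_{n+1-j}/[(n+1-j)!\,j!]$. Three sources contribute to this coefficient: the shift $\qme^{n-j}\rcdo^{j+1}f$ (after re-indexing $j\mapsto j-1$), the $\qme^2$-term produced by $\rcdo(\qme)$, and the subtraction $-(k+2n)\,\qme\,\rstdo_{(n)}f$. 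After factoring out the common Pochhammer data, the required equality reduces to the elementary polynomial identity
\[
j(k+j-1)+(n+1-j)(k+n+j)=(n+1)(k+n),
\]
which holds identically in $k$ and $j$ (and which is easily verified by expanding both sides).

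The principal obstacle is the bookkeeping in this second coefficient match; everything else is a routine Leibniz-rule expansion, provided one keeps careful track of the distinction between $\rstdo$ and $\rcdo$ at each degree. As a conceptual remark, the lemma is equivalent to the generating-function relation $CK_\rstdo(f;X)=e^{-\qme X}\,CK_\rcdo(f;X)$ (at least when the Gamma-function normalizations are available), which both explains why the coefficients take the stated binomial shape and suggests an alternative proof via an ODE satisfied by the two CK series.
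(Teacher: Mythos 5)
Your proposal is correct and follows essentially the same route as the paper: induction on $n$ via the recursion $\rstdo_{(n+1)}f=\rstdo(\rstdo_{(n)}f)+n(n+k-1)\ew4\rstdo_{(n-1)}f$, rewriting $\rstdo(\rstdo_{(n)}f)=\rcdo(\rstdo_{(n)}f)-(k+2n)\qme\,\rstdo_{(n)}f$ and expanding by Leibniz; your reduction to the identity $j(k+j-1)+(n+1-j)(k+n+j)=(n+1)(k+n)$ is exactly the coefficient computation the paper performs. The only cosmetic difference is that the paper recognizes the residual $\ew4$-sum wholesale as $-n(n+k-1)\ew4\rstdo_{(n-1)}f$ rather than cancelling it term by term, which is the same cancellation organized differently.
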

\begin{proof}
 Fix $n\in \Z_{> 0}$ and suppose that \eqref{eq f_(k)} is valid for any non-negative integer $\leq n$ (it is evident that it holds for $n=0,1$).
 Hence, taking into account \eqref{eq der zag} and \eqref{eq f_(k)} for $n$, we obtain:
 \begin{align*}
  \rstdo (\rstdo_{(n)}f)&=\rcdo(\rstdo_{(n)}f)-(k+2n)\qme \rstdo_{(n)}f\\
  &=\sum_{j=0}^n(-1)^{n-j}\frac{n! (k+j)_{n-j}}{(n-j)! j!}
  \left( (n-j)\qme^{n-j-1}(\ew4+\qme^2)\rcdo^jf+\qme^{n-j}\rcdo^{j+1}f \right.\\
  &\qquad\qquad\qquad\qquad\qquad\qquad\qquad\qquad\qquad\qquad\qquad\qquad \left. -(k+2n)\qme^{n-j+1}\rcdo^jf \right)\\
  &=\sum_{j=0}^{n+1}\left\{(-1)^{n+1-j}\frac{n! (k+j)_{n-j}}{(n+1-j)! j!} \Big( -(n-j)(n+1-j)+j(k+j-1) \right.\\
  &\qquad\qquad\qquad\qquad\qquad\qquad\qquad\qquad\qquad\quad   +(k+2n)(n+1-j) \Big) \qme^{n+1-j}\rcdo^jf  \bigg\}\\
  &\ \ \ -n(n+k-1)\ew4 \sum_{j=0}^{n-1} (-1)^{n-1-j}\frac{\ (n-1)!(k+j)_{n-1-j} \ }{(n-1-j)!j!}\qme^{n-1-j}\rcdo^jf\\
  &=\sum_{j=0}^{n+1} (-1)^{n+1-j}\frac{\ (n+1)!(k+j)_{n+1-j} \ }{(n+1-j)!j!}\qme^{n+1-j}\rcdo^jf
  -n(n+k-1)\ew4 \rstdo_{(n-1)}f.
 \end{align*}
On the other hand, by definition given in \eqref{eq f_(i)} we know that
$$\rstdo (\rstdo_{(n)}f)=\rstdo_{(n+1)}f-n(n+k-1)\ew4 \rstdo_{(n-1)}f,$$
which completes the proof of \eqref{eq f_(k)} for $n+1$.
\end{proof}

\begin{rem} \label{rem f_(n)}
 If $f\in M_k$, with $k\in\zn$, then for all $n\in\Z_{\geq 0}$ we have:
 \[
  \rstdo_{(n)}f=\left \{ \begin{array}{l}
\sum_{j=0}^n (-1)^{n-j}\frac{\ n!(k+n-1)! \ }{j!(n-j)!(k+j-1)!}\qme^{n-j}\rcdo^jf; \qquad\, {\rm if} \ k\notin \Z_{\leq 0}, \\\\
\sum_{j=-k+1}^n (-1)^{n-j}\frac{\ n!(k+n-1)! \ }{j!(n-j)!(k+j-1)!}\qme^{n-j}\rcdo^jf; \ \ {\rm if} \ k\in \Z_{\leq 0} \ {\rm and} \ n\geq -k+1, \\\\
\sum_{j=0}^n \frac{\ n!(-k-j)! \ }{j!(n-j)!(-k-n)!}\qme^{n-j}\rcdo^jf; \qquad\qquad\qquad {\rm if} \ k\in \Z_{\leq 0} \ {\rm and} \ n< -k+1.
\end{array} \right.
 \]
\end{rem}

In the following, for any $f\in \widetilde{M}$, by $e^{fX}$ we mean the formal power series $\sum_{j=0}^\infty \frac{f^j}{j!}X^j$.

\begin{lemm} \label{lemm exp(qme)CK}
 Let $f\in M_k$, with $k\in\zn$.
  \begin{enumerate}
  \item If $k\notin \Z_{\leq 0}$, then
  \begin{equation}
    e^{-\qme X} CK_\rcdo(f;X)=CK_\rstdo(f;X)\,.
  \end{equation}
  \item If $k\in \Z_{\leq 0}$, then
  \begin{align}
     &e^{-\qme X} CK_\rcdo^-(f;X)=CK_\rstdo^-(f;X) +\sum_{n=-k+1}^{+\infty}A_{n}(\qme,f)X^n, \label{eq CK-r<0}\\
     &e^{-\qme X} CK_\rcdo^+(f;X)=CK_\rstdo^+(f;X), \label{eq eCK+}
  \end{align}
where
\begin{equation} \label{eq A_n}
A_{n}(\qme,f):=(-1)^n \sum_{j=0}^{-k}
    \frac{(-k-j)!}{j!(n-j)!}\qme^{n-j}\rcdo^jf  .
\end{equation}
 \end{enumerate}
\end{lemm}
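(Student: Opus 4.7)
\bigskip

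\textbf{Proof plan.} The plan is to compute both sides of each identity as formal power series in $X$ and match coefficients of $X^n$ for every $n \geq 0$, converting the operators $\rstdo_{(n)}$ into expressions in $\qme$ and $\rcdo$ via Lemma \ref{lemm f_(k)} (equivalently, Remark \ref{rem f_(n)}). The exponential factor $e^{-\qme X} = \sum_{m\geq 0}\frac{(-\qme)^m}{m!}X^m$ acts as a Cauchy product, so the $X^n$-coefficient of $e^{-\qme X}CK_\rcdo(f;X)$ (or of its truncated pieces) becomes a single sum indexed by the power $j$ of $\rcdo$ applied to $f$. The main content is therefore bookkeeping of index ranges.

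First I would treat the case $k\notin \Z_{\leq 0}$, which is the cleanest. After expanding the Cauchy product, the coefficient of $X^n$ in $e^{-\qme X}CK_\rcdo(f;X)$ is
$\sum_{j=0}^n \frac{(-1)^{n-j}}{(n-j)!\,j!\,(k+j-1)!}\,\qme^{n-j}\rcdo^j f,$
while the coefficient of $X^n$ in $CK_\rstdo(f;X)$ is $\frac{\rstdo_{(n)}f}{n!(k+n-1)!}$, and the first formula in Remark \ref{rem f_(n)} immediately identifies these two expressions. This gives part (1) with essentially no work beyond Lemma \ref{lemm f_(k)}.

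Next I would handle part (2), doing the identity \eqref{eq eCK+} for $CK^+$ first, since it runs parallel to part (1). Here one observes that when $k\in\Z_{\leq 0}$ and $n\geq -k+1$ the factor $(k+j-1)!$ in the denominator forces $j\geq -k+1$ in the Cauchy product, and the remaining sum matches the second formula of Remark \ref{rem f_(n)}. For \eqref{eq CK-r<0} I would again expand $e^{-\qme X}CK_\rcdo^-(f;X)$ as a Cauchy product, noting that the $j$-sum is now truncated at $\min(n,-k)$ because $CK_\rcdo^-$ has only finitely many terms. Splitting the resulting series at $n=-k$, for $0\leq n\leq -k$ the truncation is inactive and the third formula of Remark \ref{rem f_(n)} shows that the coefficient equals $(-1)^n\frac{(-k-n)!}{n!}\rstdo_{(n)}f$, i.e.\ the coefficient of $X^n$ in $CK_\rstdo^-(f;X)$. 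For $n\geq -k+1$ the factor $CK_\rstdo^-$ contributes nothing, and what remains of the Cauchy product is precisely the quantity $A_n(\qme,f)$ defined in \eqref{eq A_n}.

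The main obstacle is the combinatorial index juggling in part (2): one must keep careful track of which summation range comes from the truncation of $CK^\pm_\rcdo$, which from the formula in Remark \ref{rem f_(n)}, and which from the Cauchy product, and check that they fit together so that the ``excess'' coefficients for $n\geq -k+1$ in the $CK^-$ identity collect exactly into $A_n(\qme,f)$. Once the three branches of Remark \ref{rem f_(n)} are aligned with the three regimes (small $n$ in $CK^-$, large $n$ in $CK^-$ producing $A_n$, and the $CK^+$ identity), the proof reduces to the identification of Pochhammer and factorial coefficients already encoded in Lemma \ref{lemm f_(k)}.
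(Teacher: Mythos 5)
Your proposal is correct and takes essentially the same route as the paper: the paper proves \eqref{eq CK-r<0} by expanding the Cauchy product $e^{-\qme X}CK_\rcdo^-(f;X)$, splitting the resulting series at $n=-k$, and invoking Remark \ref{rem f_(n)} to identify the low-order part with $CK_\rstdo^-(f;X)$ and the tail with $\sum_{n\geq -k+1}A_n(\qme,f)X^n$, declaring the remaining identities analogous. Your sketches of those analogous cases (matching the Cauchy-product coefficients against the appropriate branch of Remark \ref{rem f_(n)}) are also accurate.
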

\begin{proof}
 We prove \eqref{eq CK-r<0}, and the other cases can be proved in a similar manner.  To this end, we assume that $k\in \Z_{\leq 0}$, which gives:
  \begin{align*}
  e^{-\qme X} CK_\rcdo^-(f;X)&=\sum_{j=0}^\infty (-1)^j\frac{\qme^j}{j!}X^j\cdot \sum_{n=0}^{-k}  (-1)^n\frac{(-k-n)!}{n!}\rcdo^nf  X^n \\
  &=\sum_{n=0}^{-k} \left( \sum_{j=0}^{n}(-1)^n\frac{(-k-j)!}{j!(n-j)!}\qme^{n-j}\rcdo^jf  \right)X^n \\
  &\qquad\qquad \qquad\ +  \sum_{n=-k+1}^{+\infty} \left( \sum_{j=0}^{-k}(-1)^n\frac{(-k-j)!}{j!(n-j)!}\qme^{n-j}\rcdo^jf  \right) X^n \\
  &= CK_\rstdo^-(f;X)+\sum_{n=-k+1}^{+\infty} (-1)^n\left( \sum_{j=0}^{-k}
    \frac{(-k-j)!}{j!(n-j)!}\qme^{n-j}\rcdo^jf  \right) X^n\,.
 \end{align*}
The last equality follows immediately from Remark \ref{rem f_(n)}.
\end{proof}

\begin{rem} \label{rem []_rcdo,n}
  Let $f\in M_k$ and $g\in M_l$, with $k,l\in \Z_{\leq 0}$ and $l\leq k <0$.
  \begin{itemize}
    \item  If $0\leq n\leq -k$, then:
    \[
  [f,g]_{\rcdo,n}=\sum_{j=0}^n(-1)^j\frac{(k+j)_{n-j}(l+n-j)_j}{j!(n-j)!} \rcdo^{j}f \rcdo^{n-j}g   \, .
 \]
    \item  If $-k+1\leq n\leq -l$, then:
    \[
  [f,g]_{\rcdo,n}=\sum_{j=-k+1}^n(-1)^j\frac{(k+j)_{n-j}(l+n-j)_j}{j!(n-j)!} \rcdo^{j}f \rcdo^{n-j}g   \, .
 \]
    \item  If $-l+1\leq n\leq -k-l+1$, then:
    \[
  [f,g]_{\rcdo,n}=0   \, .
 \]
    \item  If $-k-l+2\leq n$, then:
    \[
  [f,g]_{\rcdo,n}=\sum_{j=-k+1}^{n+l-1}(-1)^j\frac{(k+j)_{n-j}(l+n-j)_j}{j!(n-j)!} \rcdo^{j}f \rcdo^{n-j}g   \, .
 \]
  \end{itemize}
We can find analogous statements for $[f,g]_{\rstdo,\ew4,n}$.
\end{rem}

\section{Proof of Theorem \ref{thm main1}} \label{section proof 1}
We prove the theorem for the case where $k,l\in \Z_{<0}$, and the remaining cases will be proved similarly. Without loss of generality, we can assume that $l\leq k<0$. Due to Remark \ref{rem []_rcdo,n}, we analyze the following 4 possibilities individually.
\begin{description}
  \item[1) ($0\leq n\leq -k$).]  We first observe that:
\begin{align}\label{eq rcdo CK-CK-}
  &CK_\rcdo^-(f;-X)CK_\rcdo^-(g;X)=\sum_{n=0}^{-k} (-k-n)!(-l-n)![f,g]_{\rcdo,n}X^n \\
  &\qquad\qquad\qquad\qquad\qquad\qquad\qquad\qquad \qquad\quad +\sum_{n=-k+1}^{-k-l}C_n(f,g)X^n,\nonumber\\
  &CK_\rstdo^-(f;-X)CK_\rstdo^-(g;X)=\sum_{n=0}^{-k} (-k-n)!(-l-n)![f,g]_{\rstdo,\ew4,n}X^n \label{eq rstdo CK-CK-}\\
  &\qquad\qquad\qquad\qquad\qquad\qquad\qquad\qquad \qquad\quad +\sum_{n=-k+1}^{-k-l}D_n(f,g)X^n,\nonumber
\end{align}
where
\[
  C_n(f,g):=\left \{ \begin{array}{l}
\sum_{j=0}^{-k}(-1)^{n+j}\frac{(-k-j)! (-l-n+j)!}{j! (n-j)!}\rcdo^jf \rcdo^{n-j}g; \quad\  {\rm if} \ -k+1\leq n \leq -l, \\\\
\sum_{j=n+l}^{-k}(-1)^{n+j}\frac{(-k-j)! (-l-n+j)!}{j! (n-j)!}\rcdo^jf \rcdo^{n-j}g; \  {\rm if} \ -l+1 \leq n \leq -k-l,
\end{array} \right.
\]
and $D_n(f,g)$ is analogous to $C_n(f,g)$, with $\rcdo^j$ replaced by $\rstdo_{(j)}$, for all $j$.
On the other hand, by virtue of Lemma \ref{lemm exp(qme)CK} and \eqref{eq rstdo CK-CK-}, we have:
  \begin{align}
      e^{\qme X}CK_\rcdo^-(f;-X) e^{-\qme X}&CK_\rcdo^-(g;X) =
    \sum_{n=0}^{-k} (-k-n)!(-l-n)![f,g]_{\rstdo,\ew4,n}X^n \label{eq eCK-eCK-}\\
&+\sum_{n=-k+1}^{-k-l}D_n(f,g)X^n+\sum_{n=-k+1}^{\infty} E_n(f,g)X^n,\nonumber
  \end{align}
where
\begin{align*}
\sum_{n=-k+1}^{\infty} E_n(f,g)X^n:&=CK_\rstdo^-(f;-X)\cdot \sum_{n=-l+1}^{\infty} A_n(\qme,g)X^n\\
                                   &+CK_\rstdo^-(g;X)\cdot \sum_{n=-k+1}^{\infty} (-1)^nA_n(\qme,f)X^n\\
                                   &+\sum_{n=-k+1}^{\infty} (-1)^nA_n(\qme,f)X^n \cdot \sum_{n=-l+1}^{\infty} A_n(\qme,g)X^n.
\end{align*}
Therefore, the equality \eqref{eq rcdo CK-CK-}=\eqref{eq eCK-eCK-} yields that $[f,g]_{\rstdo,\ew4,n}=[f,g]_{\rcdo,n}\,$, for all $0\leq n\leq -k$.
  \item[2) ($-k+1\leq n\leq -l$).] We obtain that:
  { \begin{align}
    CK_\rcdo^+(f;-X)&CK_\rcdo^-(g;X) =\sum_{n=-k+1}^{-l}(-1)^n \frac{(-l-n)!}{(k+n-1)!}[f,g]_{\rcdo,n}X^n \label{eq CK+CK-}\\
&+\sum_{n=-l+1}^{\infty}\left( \sum_{j=n+l}^{n} (-1)^n \frac{(-n-l+j)!}{j! (n-j)!(k+j-1)!}\rcdo^j f\rcdo^{n-j}g \right) X^n. \nonumber
\end{align}
By using Lemma \ref{lemm exp(qme)CK}, we get:
\begin{align}
    e^{\qme X}CK_\rcdo^+&(f;-X) e^{-\qme X}CK_\rcdo^+(g;X) =
    \sum_{n=-k+1}^{-l} (-1)^n\frac{(-l-n)!}{(n+k-1)!}[f,g]_{\rstdo,\ew4,n}X^n \label{eq eCK+eCK-}\\
&+\sum_{n=-l+1}^{\infty}\left( \sum_{j=n+l}^{n} (-1)^n \frac{(-n-l+j)!}{j! (n-j)!(k+j-1)!}\rstdo_{(j)} f\rstdo_{(n-j)}g \right) X^n \nonumber \\
&+CK_\rstdo^+(f;-X)\cdot \sum_{n=-l+1}^{\infty} A_n(\qme,g)X^n. \nonumber
  \end{align}}
  By comparing the coefficients of $X^n$ in the both sides of the equality \eqref{eq CK+CK-}=\eqref{eq eCK+eCK-}, we show that $[f,g]_{\rstdo,\ew4,n}=[f,g]_{\rcdo,n}$, for all $-k+1\leq n\leq -l$.
  \item[3) ($-l+1\leq n\leq -k-l+1$).] When $-l+1\leq n\leq -k-l+1$, it is straightforward to observe that
   $[f,g]_{\rstdo,\ew4,n}=0=[f,g]_{\rcdo,n}$.
  \item[4) ($-k-l+2\leq n$).] To establish the result in this case, we first obtain:
   \begin{align}
    CK_{\rcdo}^+(f;-X)CK_\rcdo^+(g;X) &=\sum_{n=-k-l+2}^{\infty} \frac{[f,g]_{\rcdo,n}}{(k+n-1)!(l+n-1)!}X^n. \label{eq CK+CK+}
  \end{align}
By Lemma \ref{lemm exp(qme)CK}, we have:
   \begin{align}
    e^{\qme X}CK_\rcdo^+(f;-X) e^{-\qme X}&CK_\rcdo^+(g;X) = CK_\rstdo^+(f;-X) CK_\rstdo^+(g;X) \label{eq eCK+eCK+} \\
    &=\sum_{n=-k-l+2}^{\infty} \frac{[f,g]_{\rstdo,\ew4,n}}{(k+n-1)!(l+n-1)!}X^n.\nonumber
  \end{align}
  Finally, equating \eqref{eq CK+CK+} and \eqref{eq eCK+eCK+}, we conclude that $[f,g]_{\rstdo,\ew4,n}=[f,g]_{\rcdo,n}$, for all $n\geq -k-l+2$.
\end{description}

\section{Core technical lemmas II} \label{section ctlII}
Throughout this section, again, $(\widetilde{M},[\cdot,\cdot]_{\rcdo,\ast})$, $(M,[\cdot , \cdot]_{\partial,\ew4,\ast})$, $\rcdo$, $\rstdo$, $\qme$ and $\ew4$ are considered as in the announcement of Theorem \ref{thm main1}.
To prove Theorem \ref{thm2}, we require some technical results concerning $\qme$, which are stated in this section.
\begin{lemm} \label{lemm Psi}
Define
\begin{align}
  & \rstdo_{(0)}\qme:=\qme, \\
  & \rstdo_{(1)}\qme=\rstdo \qme:=\ew4-\qme^2, \\
  & \rstdo_{(j+1)}\qme=\rstdo(\rstdo_{(j)}\qme)+j(j+1)\ew4\rstdo_{(j-1)}\qme, \ j=1,2,3,\ldots.
\end{align}
Then, for any non-negative integer $n$, we obtain:
\begin{equation}\label{eq qme_(k)}
  \rstdo_{(n)}\qme=\Psi_n(\ew4,\rstdo\ew4,\ldots,\rstdo^{n-1}\ew4)+(-1)^n n!\qme^{n+1},
\end{equation}
in which $\Psi_n=\Psi_n(\ew4,\rstdo\ew4,\ldots,\rstdo^{n-1}\ew4)$ is a polynomial in  $\ew4,\rstdo\ew4,\ldots,\rstdo^{n-1}\ew4$
given as follows:
\begin{align}
  & \Psi_0=0, \\
  & \Psi_1=\ew4, \\
  & \Psi_{j+1}=\rstdo \Psi_{j}+j(j+1)\ew4\Psi_{j-1}, \ j=1,2,3,\ldots.
\end{align}
In particular, $\Psi_{n}=\rstdo_{(n)}\qme+(-1)^{n+1} n!\qme^{n+1} \in M_{2n+2}$.
\end{lemm}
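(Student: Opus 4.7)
\medskip

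\noindent\textbf{Proof proposal.} The plan is a straightforward induction on $n$, with the key identity being the Leibniz expansion of $\rstdo(\qme^{n+1})$ and a sign cancellation that matches the two copies of $\ew4\qme^n$ produced by the recurrence.

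For the base cases, $n=0$ gives $\rstdo_{(0)}\qme=\qme=\Psi_0+(-1)^0\cdot 0!\cdot\qme^{1}$, and $n=1$ gives $\rstdo_{(1)}\qme=\ew4-\qme^2=\Psi_1+(-1)^1\cdot 1!\cdot\qme^{2}$, both directly from the definitions in the statement.

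For the inductive step, suppose the formula holds for all indices up to $n$. Applying the defining recurrence and the induction hypothesis,
\begin{align*}
\rstdo_{(n+1)}\qme
&=\rstdo\bigl(\Psi_n+(-1)^n n!\,\qme^{n+1}\bigr)+n(n+1)\ew4\bigl(\Psi_{n-1}+(-1)^{n-1}(n-1)!\,\qme^{n}\bigr).
\end{align*}
Since $\rstdo$ is a derivation and $\rstdo\qme=\ew4-\qme^2$, the Leibniz rule gives
\[
\rstdo(\qme^{n+1})=(n+1)\qme^{n}\rstdo\qme=(n+1)\ew4\,\qme^{n}-(n+1)\qme^{n+2}.
\]
Substituting this and using $n(n+1)(n-1)!=(n+1)!$ and $(-1)^n n!\cdot(n+1)=(-1)^n(n+1)!$, the two $\ew4\,\qme^{n}$ contributions carry coefficients $(-1)^n(n+1)!$ and $(-1)^{n-1}(n+1)!$, which cancel. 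What remains is exactly
\[
\rstdo_{(n+1)}\qme=\bigl(\rstdo\Psi_n+n(n+1)\ew4\,\Psi_{n-1}\bigr)+(-1)^{n+1}(n+1)!\,\qme^{n+2}=\Psi_{n+1}+(-1)^{n+1}(n+1)!\,\qme^{n+2},
\]
using the recursive definition of $\Psi_{n+1}$. This closes the induction and establishes \eqref{eq qme_(k)}.

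Finally, the grading claim $\Psi_n\in M_{2n+2}$ follows by a parallel induction from the same recurrence: $\Psi_1=\ew4\in M_4$; and if $\Psi_{n-1}\in M_{2n}$ and $\Psi_n\in M_{2n+2}$, then $\rstdo\Psi_n\in M_{2n+4}$ (because $\rstdo$ has degree $2$) and $\ew4\Psi_{n-1}\in M_{2n+4}$, so $\Psi_{n+1}\in M_{2(n+1)+2}$. The only mild subtlety is keeping track that $\Psi_n$ is genuinely a polynomial in $\ew4,\rstdo\ew4,\ldots,\rstdo^{n-1}\ew4$ (not involving $\qme$); this is visible from the recurrence since $\Psi_1=\ew4$ and neither $\rstdo$ applied to such a polynomial nor multiplication by $\ew4$ introduces $\qme$. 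I do not expect any real obstacle: the entire argument reduces to the sign cancellation $(-1)^n+(-1)^{n-1}=0$ that absorbs the cross terms.
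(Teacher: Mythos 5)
Your proof is correct and follows exactly the route the paper intends: the paper's own proof is the one-line remark that the result follows by a straightforward induction, and your write-up supplies precisely that induction, with the key sign cancellation $(-1)^n(n+1)!+(-1)^{n-1}(n+1)!=0$ absorbing the cross terms. The only implicit ingredient worth being aware of is that $\rstdo$ must be understood as extended to $\widetilde{M}=M[\qme]$ as a derivation (equivalently, $\rstdo h=\rcdo h-(\deg h)\,\qme h$, which gives $\rstdo\qme=\ew4-\qme^2$), which is exactly how the paper uses it in the proof of Lemma \ref{lemm f_(k)}, so your appeal to the Leibniz rule on $\qme^{n+1}$ is justified.
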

\begin{proof}
 Using induction, the proof becomes straightforward.
\end{proof}

\begin{rem}
Analogous to Lemma \ref{lemm f_(k)} we can see that for any $n\in\Z_{\geq 0}$:
 \begin{align}\label{eq qme_(k)2}
  \rstdo_{(n)}\qme&=\sum_{j=0}^n (-1)^{n-j}\frac{\ n!(j+2)_{n-j} \ }{j!(n-j)!}\qme^{n-j}\rcdo^j\qme\\
  &=\sum_{j=0}^n (-1)^{n-j}\frac{\ n!(n+1)! \ }{j!(j+1)!(n-j)!}\qme^{n-j}\rcdo^j\qme, \nonumber
 \end{align}
 and after using \eqref{eq qme_(k)} we obtain:
\begin{equation} \label{eq qme^(k)}
 \rcdo^n\qme=\Psi_{n}+(-1)^{n}n!\qme^{n+1}
 +\sum_{j=0}^{n-1}(-1)^{n+1-j}\frac{n!(n+1)!}{j!(j+1)!(n-j)!}\qme^{n-j}\rcdo^j\qme \,.
\end{equation}
 In the same way as Lemma \ref{lemm exp(qme)CK} we observe that:
 \begin{equation}
    e^{-\qme X} CK_\rcdo(\qme;X)=CK_\rstdo(\qme;X)\,,
  \end{equation}
  in which $CK_\rcdo(\qme;X)$ and $CK_\rstdo(\qme;X)$ are defined similar to \eqref{eq CK_D>0} and \eqref{eq CK_rsdo>0}, respectively.
  Finally, analogous to Theorem \ref{thm main1} we can demonstrate that for any $f\in M_k$, with  $k\in\zn$:
\begin{equation}\label{eq [qme,f]=[qme,f]}
  [\qme,f]_{\rcdo,n}=[\qme,f]_{\partial,\ew4,n} \ .
\end{equation}
\begin{equation}\label{eq [qme,qme]=[qme,qme]}
  [\qme,\qme]_{\rcdo,n}=[\qme,\qme]_{\partial,\ew4,n} \ .
\end{equation}
where $[\qme,f]_{\partial,\ew4,n}$ and $[\qme,\qme]_{\partial,\ew4,n}$ are defined similar to \eqref{eq crcb}.
\end{rem}

\begin{lemm} \label{lemm qme^(k+1)}
 For any non-negative integer $n$ we have:
 \begin{equation} \label{eq qme^(k+1)}
  \rcdo^{n+1}\qme=\sum_{j=0}^{n+1}\frac{(n+1)!(n+2)!}{j!(j+1)!(n+1-j)!}\qme^{n+1-j}\Psi_j+(n+1)!\qme^{n+2}\,.
 \end{equation}
\end{lemm}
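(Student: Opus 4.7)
The plan is to read off the desired identity as the coefficient of $X^n$ in the generating-function identity
$$CK_\rcdo(\qme;X)=e^{\qme X}\,CK_\rstdo(\qme;X),$$
which is the special case of Lemma \ref{lemm exp(qme)CK} applied to $\qme$ (and which is recorded explicitly in the remark immediately preceding the lemma). Since $\qme$ has degree $2$, the two CK series are
$$CK_\rcdo(\qme;X)=\sum_{n\ge 0}\frac{\rcdo^{n}\qme}{n!\,(n+1)!}X^n,\qquad CK_\rstdo(\qme;X)=\sum_{n\ge 0}\frac{\rstdo_{(n)}\qme}{n!\,(n+1)!}X^n,$$
and Lemma \ref{lemm Psi} makes the right-hand coefficients fully explicit via $\rstdo_{(n)}\qme=\Psi_n+(-1)^n n!\,\qme^{n+1}$. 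Hence the whole proof reduces to an ordinary multiplication of formal power series.

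First I would split $CK_\rstdo(\qme;X)=G(X)+H(X)$, where
$$G(X):=\sum_{n\ge 0}\frac{\Psi_n}{n!\,(n+1)!}X^n,\qquad H(X):=\sum_{n\ge 0}\frac{(-1)^{n}\qme^{n+1}}{(n+1)!}X^n.$$
Shifting the summation index $m=n+1$ telescopes $H(X)$ into the closed form $H(X)=(1-e^{-\qme X})/X$, so that
$$CK_\rcdo(\qme;X)=e^{\qme X}G(X)+\frac{e^{\qme X}-1}{X}.$$
Next I would extract the coefficient of $X^n$. Expanding $e^{\qme X}G(X)$ as a Cauchy product and using $(e^{\qme X}-1)/X=\sum_{n\ge 0}\qme^{n+1}X^n/(n+1)!$ yields
$$\frac{\rcdo^{n}\qme}{n!\,(n+1)!}=\sum_{j=0}^{n}\frac{\qme^{n-j}\Psi_j}{(n-j)!\,j!\,(j+1)!}+\frac{\qme^{n+1}}{(n+1)!}.$$
Clearing denominators by $n!\,(n+1)!$ and then relabelling $n\mapsto n+1$ delivers exactly the formula \eqref{eq qme^(k+1)}. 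As a sanity check, the case $n=0$ collapses to $\rcdo\qme=\Psi_1+\qme^2=\ew4+\qme^2$, in agreement with \eqref{eq der zag}.

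The genuinely substantive step is the elementary but crucial telescoping that collapses the $\qme$-piece into $(1-e^{-\qme X})/X$: this is what produces the cancellation of the alternating signs present in the recursive formula \eqref{eq qme^(k)} and leaves the all-positive coefficients in \eqref{eq qme^(k+1)}. An alternative route by induction on $n$, substituting \eqref{eq qme^(k)} into $\rcdo^{n+1}\qme=\rcdo(\rcdo^{n}\qme)$ and using $\rcdo\Psi_j=\rstdo\Psi_j+(2j+2)\qme\Psi_j$ together with the $\Psi$-recursion of Lemma \ref{lemm Psi}, is available, but it requires delicate binomial identities that the CK-series argument bypasses entirely.
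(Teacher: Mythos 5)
Your proof is correct, and it takes a genuinely different route from the paper's. The paper proves \eqref{eq qme^(k+1)} by a rather heavy induction: it introduces an auxiliary family of identities \eqref{eq qme^(k+1)s} indexed by an extra parameter $m$, obtained by repeatedly back-substituting the expression \eqref{eq qme^(k)} for lower derivatives $\rcdo^{j}\qme$ into itself, and then sets $m=n+1$ and evaluates the alternating sum $\sum_{j=0}^{n+1}(-1)^{j}\frac{(n+1)!(n+2)!}{(j+1)!(n+1-j)!}=(n+1)!$ at the end. You instead invert the generating-function relation $e^{-\qme X}CK_\rcdo(\qme;X)=CK_\rstdo(\qme;X)$ (which the paper records in the remark preceding the lemma, so there is no circularity), plug in the closed form $\rstdo_{(n)}\qme=\Psi_n+(-1)^n n!\,\qme^{n+1}$ from Lemma \ref{lemm Psi}, and observe that the alternating $\qme$-piece telescopes to $(1-e^{-\qme X})/X$, after which a single Cauchy product gives the coefficients. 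The two arguments encode the same combinatorics --- your telescoping is exactly what replaces the paper's evaluation of the alternating binomial sum --- but yours is shorter, avoids the auxiliary double induction entirely, and makes transparent why the final coefficients are all positive; the paper's approach has the modest advantage of staying entirely at the level of finite identities in $\widetilde{M}$ without invoking formal power series. All the individual steps check out, including the $n=0$ sanity check $\rcdo\qme=\Psi_1+\qme^2=\ew4+\qme^2$ consistent with \eqref{eq der zag}.
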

\begin{proof}
 First, we prove that for any integer $0\leq m\leq n+1$, the following holds:
 \begin{align} \label{eq qme^(k+1)s}
  \rcdo^{n+1}\qme&=\sum_{j=n+1-m}^{n+1}\frac{(n+1)!(n+2)!}{j!(j+1)!(n+1-j)!}\qme^{n+1-j}\Psi_j \\
  &+\qme^{n+2}\sum_{j=n+1-m}^{n+1}(-1)^{j}\frac{(n+1)!(n+2)!}{(j+1)!(n+1-j)!} \nonumber\\
  &+\sum_{j=0}^{n-m}(-1)^{n+2-j-m}\frac{(n+1)!(n+2)!\prod_{i=0}^{m-1}(n-j-i)}{m!j!(j+1)!(n+1-j)!}\qme^{n+1-j}\rcdo^{j}\qme \nonumber\,,
 \end{align}
 where we assume that $\sum_{j=0}^{-1}\Box_j=0$ and $\prod_{j=0}^{-1}\Box_j=1$.
Setting $m=0$ and using \eqref{eq qme^(k)} for $n+1$, we immediately obtain \eqref{eq qme^(k+1)s} for this base case.
Next, assuming that \eqref{eq qme^(k+1)s} holds for some $0\leq m\leq n$, we prove that
it also holds for $m+1$. To this end, we compute $\rcdo^{n-m}\qme$ from \eqref{eq qme^(k)} and substitute it into \eqref{eq qme^(k+1)s} for $m$. This yields:
 \begin{align} \label{}
  \rcdo^{n+1}\qme&=\sum_{j=n+1-m}^{n+1}\frac{(n+1)!(n+2)!}{j!(j+1)!(n+1-j)!}\qme^{n+1-j}\Psi_j \nonumber\\
  &+\qme^{n+2}\sum_{j=n+1-m}^{n+1}(-1)^{j}\frac{(n+1)!(n+2)!}{(j+1)!(n+1-j)!} \nonumber\\
  &+\sum_{j=0}^{n-m-1}(-1)^{n+2-j-m}\frac{(n+1)!(n+2)!\prod_{i=0}^{m-1}(n-j-i)}{m!j!(j+1)!(n+1-j)!}\qme^{n+1-j}\rcdo^{j}\qme \nonumber \\
  &+\frac{(n+1)!(n+2)!}{(n-m)!(n-m+1)!(m+1)!}\qme^{m+1}\Psi_{n-m}\nonumber\\
  &+(-1)^{n-m}\frac{(n+1)!(n+2)!}{(n-m+1)!(m+1)!}\qme^{n+2} \nonumber \\
  &+\sum_{j=0}^{n-m-1}(-1)^{n-m+1-j}\frac{(n+1)!(n+2)!}{j!(j+1)!(n-m-j)!(m+1)!}\qme^{n+1-j}\rcdo^{j}\qme \nonumber \\
  &=\sum_{j=n+1-(m+1)}^{n+1}\frac{(n+1)!(n+2)!}{j!(j+1)!(n+1-j)!}\qme^{n+1-j}\Psi_j \nonumber \\
  &+\qme^{n+2}\sum_{j=n+1-(m+1)}^{n+1}(-1)^{j}\frac{(n+1)!(n+2)!}{(j+1)!(n+1-j)!} \nonumber\\
  &+\sum_{j=0}^{n-(m+1)}(-1)^{n+2-j-(m+1)}\frac{(n+1)!(n+2)!\prod_{i=0}^{(m+1)-1}(n-j-i)}{(m+1)!j!(j+1)!(n+1-j)!}\qme^{n+1-j}\rcdo^{j}\qme \nonumber\,,
 \end{align}
 which establishes \eqref{eq qme^(k+1)s} for $m+1$. Finally, setting $m=n+1$ in \eqref{eq qme^(k+1)s} and noting that
 $$\sum_{j=0}^{n+1}(-1)^{j}\frac{(n+1)!(n+2)!}{(j+1)!(n+1-j)!}=(n+1)!,$$
we obtain \eqref{eq qme^(k+1)}, completing the proof.
\end{proof}

\section{Proof of Theorem \ref{thm2}} \label{section proof 2}

We first prove that $\KK^{(n)}f\in M_{k+2+2n}$. Using equation \eqref{eq [qme,f]=[qme,f]} and then applying \eqref{eq qme_(k)}, we obtain:
\begin{align}
 \KK^{(n)}f=&\rcdo^{n+1}f-(n+k)[\qme,f]_{\rcdo,n}=\rcdo^{n+1}f-(n+k)[\qme,f]_{\partial,\ew4,n} \nonumber \\
   =&\rcdo^{n+1}f-(n+k)\sum_{j=0}^n(-1)^j\binom{n+1}{n-j}\binom{n+k-1}{j}\rstdo_{(j)}\qme \rstdo_{(n-j)}f  \nonumber\\
  =&\rcdo^{n+1}f-(n+k)\sum_{j=0}^n(-1)^j\binom{n+1}{n-j}\binom{n+k-1}{j}\Psi_{j}\rstdo_{(n-j)}f\nonumber\\
  & -(n+k)\sum_{j=0}^n(-1)^{2j}j!\binom{n+1}{n-j}\binom{n+k-1}{j}\qme^{j+1}\rstdo_{(n-j)}f   \nonumber\\
   =&\rcdo^{n+1}f-(n+k)\sum_{j=0}^n(-1)^j\binom{n+1}{n-j}\binom{n+k-1}{j}\Psi_{j}\rstdo_{(n-j)}f\nonumber\\
  & -(n+k)\sum_{j=0}^n\frac{1}{j!}\bigg( \sum_{i=j}^n (-1)^{i-j}\frac{(n+2-i)_i(k+i)_{n-i}(k+j)_{i-j}}{(i-j)!} \bigg)
  \qme^{n+1-j}\rcdo^{j}f \nonumber \\
  =&-(n+k)\sum_{j=0}^n(-1)^j\binom{n+1}{n-j}\binom{n+k-1}{j}\Psi_{j}\rstdo_{(n-j)}f+\rstdo_{(n+1)}f . \label{eq last eq}
\end{align}
In the above equalities, we substitute $\rstdo_{(n-j)}f$ and $\rstdo_{(n+1)}f$ using equation \eqref{eq f_(k)}. Additionally, for
integers $0\leq j\leq n$, we use the following identity:
\[
 \sum_{i=j}^n \frac{(-1)^{i-j}}{(n+1-i)!(i-j)!}=\sum_{i=0}^{n-j} \frac{(-1)^{i}}{(n-j+1-i)!\,i!}=\frac{(-1)^{n-j}}{(n-j+1)!}\,.
\]
Since all terms in equation \eqref{eq last eq} belong to $M_{k+2+2n}$, this completes the proof that $\KK^{(n)}f\in M_{k+2+2n}$.

To prove that $\KK^{(n)}\qme\in M_{4+2n}$, we first note that if $n$ is odd, then there is nothing to prove since $\KK^{(n)}\qme=0$. Hence, we assume that $n$ is an even positive integer. Using equations \eqref{eq [qme,qme]=[qme,qme]}, \eqref{eq qme_(k)} and \eqref{eq qme^(k+1)}, along with the identity
\[
\sum_{j=0}^{n+1}(-1)^{j}\frac{(n+1)!(n+2)!}{(j+1)!(n+1-j)!}=(n+1)!,
\]
we obtain:
\begin{align}
 \KK^{(n)}\qme=&-(n+2)[\qme,\qme]_{\rcdo,n}+2\rcdo^{n+1}\qme=-(n+2)[\qme,\qme]_{\partial,\ew4,n}+2\rcdo^{n+1}\qme \nonumber \\
  =& -(n+2)\sum_{j=0}^n \left\{ (-1)^j\binom{n+1}{n-j}\binom{n+1}{j} \Big(\Psi_j+(-1)^{j} j! \qme^{j+1} \Big) \right. \nonumber \\
 &\quad\qquad\qquad\qquad\qquad\qquad  \left( \Psi_{n-j}+(-1)^{n-j} (n-j)! \qme^{n+1-j} \right) \bigg\}  +2\rcdo^{n+1}\qme \nonumber\\
   =&- \sum_{j=0}^n(-1)^{j}\frac{(n+1)!(n+2)!}{j!(j+1)!(n-j)!(n+1-j)!}\Psi_{j}\Psi_{n-j} \nonumber\\
   &- 2\sum_{j=0}^n\frac{(n+1)!(n+2)!}{j!(j+1)!(n+1-j)!}\qme^{n+1-j}\Psi_{j}- 2(n+1)!\qme^{n+2}+2\rcdo^{n+1}\qme \nonumber\\
  =&-(n+2)\sum_{j=0}^n(-1)^{j}\binom{n+1}{n-j}\binom{n+1}{j} \Psi_{j}\Psi_{n-j}+2\Psi_{n+1}. \nonumber
\end{align}
Every term in the final equation lies in $M_{4+2n} $, confirming that $\KK^{(n)}\qme\in M_{4+2n}$.


\section{Lie algebra $\sl2$ and RRC systems} \label{section sl2}

Consider the following system of nonlinear ODEs, known as the Ramanujan system:
\begin{equation} \label{eq ramanujan}
 \left \{ \begin{array}{l}
t_1'=\frac{1}{12}(t_1^2-t_2) \\
t_2'=\frac{1}{3}(t_1t_2-t_3) \\
t_3'=\frac{1}{2}(t_1t_3-t_2^2)
\end{array} \right., \qquad with  \  \ast'=q\frac{\partial \ast}{\partial q}=\frac{1}{2\pi i}\frac{d}{d \tau}\ \ and \ \ q=e^{2\pi i\tau}\ ,
\end{equation}
Ramanujan, in \cite{ra16}, showed that the triple $(E_2,E_4,E_6)$ of Eisenstein series forms a solution to this system. This, together with the fact that $\qmfs(\SL2)=\C[E_2,E_4,E_6]$, implies that the usual normalized derivation on $\qmfs(\SL2)$ can be written as \eqref{eq qmf der}.
 Moreover, if we consider the weight derivation $\rdo=2E_2\frac{\partial }{\partial E_2}+4E_4\frac{\partial }{\partial E_4}+6E_6\frac{\partial }{\partial E_6}$ and set $\cdo=-12\frac{\partial }{\partial E_2}$, then their Lie brackets satisfy the identities:
\[
[\rcdomf,\cdo]=\rdo \ , \ \ [\rdo,\rcdomf]=2\rcdomf \ , \ \ [\rdo,\cdo]=-2\cdo.
\]
In other words, $\rcdomf, \rdo$ and $\cdo$ endow $\qmfs(\SL2)$ with an $\sl2$-module structure.
Furthermore, if $\Gamma\subset {\rm PSL}_2(\R)$ is a non-cocompact Fuchsian subgroup, then Zagier in \cite{bhgz} showed that there exists a weight-$2$ quasi-modular form $\qme$ (which is not a modular form) such that $\qmfs(\Gamma)=\mfs(\Gamma)[\qme]$. Additionally, he proved the existence of a derivation $\cdo$, which is a constant multiple of $\frac{\partial}{\partial \qme}$, on $\qmfs(\Gamma)$ such that together with the usual derivation $\rcdomf$ and the weight derivation $\rdo$, it provides $\qmfs(\Gamma)$ with an $\sl2$-module structure. By virtue of Theorem \ref{thm4}, this is equivalent to the fact that the RC algebra  $\big(\mfs(\Gamma),[\cdot,\cdot]_\ast\big)$ is canonical.

If we substitute $t_1$ by $\frac{1}{12}t_1$, then the Ramanujan system \eqref{eq ramanujan} transforms into the following form::
\begin{equation} \label{eq ramanujan2}
 \left \{ \begin{array}{l}
t_1'=t_1^2-\frac{1}{144}t_2 \\
t_2'=4t_1t_2-\frac{1}{3}t_3 \\
t_3'=6t_1t_3-\frac{1}{2}t_2^2
\end{array} \right..
\end{equation}
Here, the weights 2 and 4 of $E_4$ and $E_6$, respectively, appear as multiples of $t_1t_2$ and $t_1t_3$ in the system. In fact, any finitely generated canonical RC algebra can be associated with such a system, and vice versa. We demonstrated this in \cite[Theorem 2.1]{bn23}, and due to the importance of this theorem for the present work, we restate it here without proof. This result is also a key tool in the proof of Theorem \ref{thm4}.
\begin{theo} \label{thm5} {\bf (\cite[Theorem 2.1]{bn23})}
Let $\rcdo$ be a derivation on a graded algebra of type $\zn$ over a field $\k$ of characteristic zero. Consider the following system of nonlinar ODEs
\begin{equation} \label{eq rrc system}
\left\{
  \begin{array}{ll}
    \rcdo{\vs_1}=\vs_1^2+\prs_1(\vs_2,\vs_3,\ldots,\vs_\nvs)  \\
    \rcdo{\vs_2}={\wss_2}\vs_1\vs_2+\prs_2(\vs_2,\vs_3,\ldots,\vs_\nvs)  \\
    \rcdo{\vs_3}={\wss_3}\vs_1\vs_3+\prs_3(\vs_2,\vs_3,\ldots,\vs_\nvs)  \\
    \vdots  \\
    \rcdo{\vs_\nvs}={\wss_\nvs}\vs_1\vs_\nvs+\prs_\nvs(\vs_2,\vs_3,\ldots,\vs_\nvs)
  \end{array}
\right.\,,
\end{equation}
where~$\vs_j$ is of degree $\wss_j\in\zn\,,(\wss_1=2)$, and $\prs_j(\vs_2,\vs_3,\ldots,\vs_\nvs)\in \k[\vs_2,\dots,\vs_d]$ is a quasi-homogeneous polynomial of degree~$\wss_j+2$.
\begin{enumerate}
\item The algebra $\k[\vs_1,\dots,\vs_d]$ is a standard RC algebra with derivation $\rcdo$, and the sub-algebra $\k[\vs_2,\dots,\vs_d]$ is a canonical RC algebra.
\item Conversely, every finitely generated canonical RC algebra arises in this way.
\end{enumerate}
\end{theo}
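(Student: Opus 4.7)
For part (1), I would first equip $\k[\vs_1,\ldots,\vs_\nvs]$ with the grading $\deg\vs_j=\wss_j$ (with $\wss_1=2$). The system \eqref{eq rrc system} prescribes $\rcdo$ on the generators, and the Leibniz rule extends it uniquely to a derivation on the whole algebra. The quasi-homogeneity of each $\prs_j$ ensures that $\rcdo$ shifts degree by exactly $2$, so by Definition \ref{defi srca}, $(\k[\vs_1,\ldots,\vs_\nvs],[\cdot,\cdot]_{\rcdo,\ast})$ is a standard RC algebra.

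To realize $M:=\k[\vs_2,\ldots,\vs_\nvs]$ as a canonical sub-RC algebra I would apply Theorem \ref{thm main1} in reverse. Identify $\vs_1$ with the auxiliary element $\qme$, so $\widetilde{M}=M[\vs_1]$. Define a degree-$2$ derivation $\rstdo$ on $M$ by $\rstdo(\vs_j):=\prs_j(\vs_2,\ldots,\vs_\nvs)$ for $j\geq 2$ (extended by Leibniz), and set $\ew4:=\prs_1(\vs_2,\ldots,\vs_\nvs)\in M_4$. The derivation on $\widetilde{M}$ prescribed by formula \eqref{eq der zag} from this $(\rstdo,\ew4)$ agrees with the given $\rcdo$ on each generator---$\vs_1^2+\ew4$ on $\vs_1$, and $\rstdo(\vs_j)+\wss_j\vs_1\vs_j$ on $\vs_j$---and therefore everywhere. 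Theorem \ref{thm main1} then delivers $[\cdot,\cdot]_{\rstdo,\ew4,\ast}=[\cdot,\cdot]_{\rcdo,\ast}|_{M\otimes M}$, which is precisely the canonical structure on $M$.

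For part (2), I would take a finitely generated canonical RC algebra $(M,[\cdot,\cdot]_{\rstdo,\ew4,\ast})$ together with a chosen set of homogeneous generators $\vs_2,\ldots,\vs_\nvs$ of weights $\wss_2,\ldots,\wss_\nvs$. By Theorem \ref{thm main1}, $M$ embeds into the standard RC algebra $\widetilde{M}=M[\qme]$ with derivation $\rcdo$ defined by \eqref{eq der zag}. Setting $\vs_1:=\qme$, I would read off $\rcdo(\vs_1)=\vs_1^2+\ew4$ and $\rcdo(\vs_j)=\rstdo(\vs_j)+\wss_j\vs_1\vs_j$. Since $\ew4\in M_4$ and $\rstdo(\vs_j)\in M_{\wss_j+2}$, each can be written as a quasi-homogeneous polynomial $\prs_1,\prs_j$ in $\vs_2,\ldots,\vs_\nvs$ of degree $4$ and $\wss_j+2$ respectively, producing a system of the form \eqref{eq rrc system}.

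The main subtlety I would worry about lies in part (2): the chosen generators $\vs_2,\ldots,\vs_\nvs$ may satisfy algebraic relations, so $M$ is a priori a proper quotient of the free polynomial ring $\k[\vs_2,\ldots,\vs_\nvs]$ appearing in part (1), rather than literally equal to it. This is not a genuine obstacle, since $\rcdo$ is by construction a well-defined derivation on $\widetilde{M}$: any polynomial lifts chosen for $\prs_1$ and the $\prs_j$ are automatically compatible with the ideal of relations, so the system \eqref{eq rrc system} descends to $M$ and realizes it as the canonical sub-RC algebra of the standard RC algebra built from the polynomial model supplied by part (1).
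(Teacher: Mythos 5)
The paper deliberately restates this theorem without proof, deferring to \cite[Theorem 2.1]{bn23}, but the identifications it records as the core of that proof---$\qme=\vs_1$, $\ew4=\prs_1(\vs_2,\ldots,\vs_\nvs)$, $\rstdo\vs_j=\prs_j(\vs_2,\ldots,\vs_\nvs)$, combined with Theorem \ref{thm main1}---are exactly the ones your argument uses in both directions. Your proposal is correct and follows essentially the same route, including the (correctly resolved) point that relations among homogeneous generators do not obstruct choosing quasi-homogeneous polynomial representatives for $\ew4$ and the $\rstdo\vs_j$.
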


We refer to the system \eqref{eq rrc system} as the \emph{Ramanujan system of Rankin-Cohen type} (RRC system). Theorem \ref{thm main1} plays a crucial role in the proof of Theorem \ref{thm5}, and the reader is referred to the cited reference for its proof. It is worth noting that in the proof, we consider $\qme=\vs_1$, $\ew4=\prs_1(\vs_2,\vs_3,\ldots,\vs_\nvs)$ and $\rstdo \vs_j=\prs_j(\vs_2,\vs_3,\ldots,\vs_\nvs)$ for $j=2,3,\ldots, \nvs$. Furthermore, the derivation $\rcdo$ defined by the RRC system \eqref{eq rrc system}, together with the weight derivation $\rdo=\sum_{j=1}^{\nvs}\wss_jt_j\frac{\partial}{\partial t_j}$ and the derivation $\cdo=-\frac{\partial}{\partial t_1}$, endows $\k[\vs_1,\vs_2,\ldots,\vs_d]$ with an $\sl2$-module structure.

\begin{exam} \label{ex cyqms}({\bf A noteworthy non-classical example})
The author in \cite{younes3} studied the space of CY (quasi-)modular forms ($\cyqmfs$) $\cymfs$ associated with a certain moduli space $\T=\T_n$ of a family of CY $n$-folds arising from the Dwork family, with $n\in \N$, whose underlying graded algebra is of type $\Z$. In particular, for any $n$, he identified the derivations $\rcdo(={\sf R})$, $\rdo(={\sf H})$ and $\cdo(={\sf F})$, which equip $\cyqmfs$ with an $\sl2$-module structure. For $n=1$ and $n=2$,  we recover the classical space of (quasi-)modular forms for $\Gamma_0(3)$ and $\Gamma_0(2)$, respectively, which have been studied in detail in \cite{younes2}. For $n\geq 3$, we obtain non-classical examples for standard and canonical RC algebras, as well as RRC systems. We now present the case $n=3$, which holds significant importance for both mathematicians and physicists, as it is associated with mirror quintic 3-folds. In this case, we observe that $\dim \T=7$ and $\cyqmfs=\C[\t_1,\t_2,\ldots,\t_7,\t_8]$, where $\t_8:=\frac{1}{\t_5(\t_1^5-\t_5)}$ and the $\t_j$'s, with $1\leq j \leq 7$, are solutions of a vector field known as modular vector field on $\T$ (see \cite{movnik,younes3} for more details). Additionally, the degrees of the generators are given by $\deg(\t_1)=1, \ \deg(\t_2)=2, \ \deg(\t_3)=3, \ \deg(\t_4)=0, \ \deg(\t_5)=5, \ \deg(\t_6)=1, \ \deg(\t_7)=2$ and $\deg(\t_8)=-10$. We find in \cite[Example 5.1]{younes3} that:
\begin{align}
\rcdo&=(\t_3-\t_1\t_2)\frac{\partial}{\partial
\t_1}+\left(5^{-4}\t_3^3\t_4\t_5\t_8-\t_2^2\right)\frac{\partial}{\partial \t_2} \label{eq mvf R3}
+\left(5^{-4}\t_3^3\t_5\t_6\t_8-3\t_2\t_3\right)\frac{\partial}{\partial
\t_3}\\&  +(-\t_2\t_4-\t_7)\frac{\partial}{\partial \t_4}
+(-5\t_2\t_5)\frac{\partial}{\partial
\t_5}+(5^5\t_1^3-\t_2\t_6-2\t_3\t_4)\frac{\partial}{\partial
\t_6}\nonumber\\&+(-5^4\t_1\t_3-\t_2\t_7)\frac{\partial}{\partial \t_7}+(-5\t_1^4\t_3\t_5\t_8^2+10\t_2\t_8)\frac{\partial}{\partial \t_8}\,, \nonumber\\
\rdo&=\t_1\frac{\partial}{\partial \t_1}+2\t_2\frac{\partial}{\partial
\t_2}+3\t_3\frac{\partial}{\partial \t_3}+5\t_5\frac{\partial}{\partial
\t_5}+\t_6\frac{\partial}{\partial \t_6}+2\t_7\frac{\partial}{\partial \t_7}-10\t_8\frac{\partial}{\partial \t_8}\,,\label{eq rdo T3} \\
\cdo&=\frac{\partial}{\partial \t_2}-\t_4\frac{\partial}{\partial
\t_7}\,.
\end{align}
We observe that $\rcdo$ is a degree-$2$ derivation on $\cyqmfs$, which endows it with a standard RC algebra structure. Moreover, we obtain:
\[
[\rcdo,\cdo]=\rdo, \ \ [\rdo,\rcdo]=2\rcdo, \ \ [\rdo,\cdo]=-2\cdo,
\]
$\cdo \t_j=0$ for $j=1,3,4,5,6,8$, and $\cdo(\t_7+\t_2\t_4)=0$. Thus, setting $\tilde{\t}_7:=\t_7+\t_2\t_4$ and $\cymfs:=\ker \cdo=\C[\t_1,\t_3,\t_4,\t_5,\t_6,\tilde{\t}_7,\t_8]$, we obtain $\cyqmfs=\cymfs[\t_2]$. Consequently, by Theorem \ref{thm4}, the space of CY modular forms $\cymfs$ is a canonical RC algebra. However, if we represent $\rcdo$ as a system of ODEs, it is not match the form of an RRC system introduced in \eqref{eq rrc system}. Nonetheless, by substituting the generators $\t_2$ and $\t_7$ by $\tilde{\t}_2:=-\t_2$ and $\tilde{\t}_7$, respectively, and--by abuse of notation--denoting them again by $\t_2$ and $\t_7$, we obtain the desired RRC system:
\begin{equation} \label{eq rrc system T3}
\rcdo: \ \left\{
  \begin{array}{ll}
    \rcdo{\t_1}=\t_1\t_2+\t_3  \\
    \rcdo{\t_2}= \t_2^2-5^{-4}\t_3^3\t_4\t_5\t_8 \\
    \rcdo{\t_3}= 3\t_2\t_3+5^{-4}\t_3^3\t_5\t_6\t_8 \\
    \rcdo{\t_4}= -\t_7 \\
    \rcdo{\t_5}= 5\t_2\t_5  \\
    \rcdo{\t_6}= \t_2\t_6+5^5\t_1^3-2\t_3\t_4 \\
    \rcdo{\t_7}= 2\t_2\t_7-5^4t_1t_3+5^{-4}\t_3^3\t_4^2\t_5\t_8 \\
    \rcdo{\t_8}= -10\t_2\t_8-5\t_1^4\t_3\t_5\t_8^2
  \end{array}
\right.\,.
\end{equation}
Furthermore, the weight derivation $\rdo$ remains the same as in \eqref{eq rdo T3} and $\cdo=-\frac{\partial}{\partial \t_2}$. It is worth to mention that the importance of $\t_j$'s becomes even more evident when we consider that Yukawa coupling, partition functions of topological string theory, and mirror map can all be expressed in terms of $\t_j$'s (see \cite{younes3} and references therein).
\end{exam}

Note that in the above example, $\cyqmfs$ contains elements of negative degree, such as $\t_8$, as well as non-constant elements of degree $0$, such as $\t_4$. The stated results in this example hold thanks to Theorem \ref{thm1}.

Any finitely generated RC algebra $(M,[\cdot,\cdot]_\ast)$ that contains a homogeneous element $F$ of degree $\wss$, which is not a zero divisor, can be associated to an RRC system. In this case, it is either a canonical RC algebra, or it is a sub-RC algebra of a canonical RC algebra. We consider the following two cases.
\begin{description}
  \item[(I)] If $F$ satisfies the conditions $[F,M]_1\subseteq M\cdot F$ and $[F,F]_2\in M\cdot F^2$, then  for any $f\in M_k$, with $k\in \zn$, we define:
\[
  \ew4\:=\frac{[F,F]_2}{\wss^2(\wss+1)F^2}\in M_4\,, \ \ \ \rstdo f:=\frac{[F,f]_1}{\wss F}\in{M}_{k+2} \,, \ \ \ \qme:= \frac{\rcdo F}{\wss F},
\]
where $\rcdo$ is defined by \eqref{eq der zag}. With these definitions, we transform $M$ into a canonical RC algebra. In the corresponding RRC system we find:
\[
\prs_1(\vs_2,\vs_3,\ldots,\vs_\nvs)=\frac{[F,F]_2}{\wss^2(\wss+1)F^2}=\ew4, \ \ \prs_j(\vs_2,\vs_3,\ldots,\vs_\nvs)=\frac{[F,\vs_j]_1}{\wss F}, \ \ 2\leq j \leq \nvs,
\]
where $\vs_2,\vs_3,\ldots,\vs_\nvs$ are generators of $M$ and $t_1=\qme$.
  \item[(II)] If either of the  conditions in {\bf (I)} does not hold, then $M$ is a sub-RC algebra of the canonical RC algebra $\widehat{M}:=M\left[\vs_{\nvs+1}\right]$, where $\vs_{\nvs+1}:=\frac{1}{F}$. Since $[F,F]_1=0$, we get that $\rstdo F=0$, and thus $\rcdo \vs_{\nvs+1}=-\wss \vs_1 \vs_{\nvs+1}$.
\end{description}

These results hold thanks to Theorem \ref{thm main1}, as the degree of $\frac{1}{F}$ may be a negative integer.
For more details see \cite[Section 6]{zag94} and \cite[Remark 2.2]{bn23}.

The key takeaway is that, if we know $[F,\vs_j]_1$ for all $j=2,3,\ldots,\nvs$, and $[F,F]_2$, then we can determine all the RC brackets $[\cdot,\cdot]_\ast$ on $M$. This is because we can determine $\rcdo$ and we have $[\cdot,\cdot]_n=[\cdot,\cdot]_{\rcdo,n}$, for all non-negative integers $n$.

\bigskip

{\bf Acknowledgment.} The author completed the main results of this work during his one-year visit to the Max Planck Institute for Mathematics (MPIM) in Bonn in 2021. He would like to express his gratitude to MPIM. The author also thanks the ``Fundação Carlos Chagas Filho de Amparo à Pesquisa do Estado do Rio de Janeiro (FAPERJ)" for their support. While preparing the final version of this paper, he benefited from the ``Auxílio
Básico à Pesquisa (APQ1) - 2023"  process no. 210.604/2024 - SEI-260003/005867/2024.

\def\cprime{$'$} \def\cprime{$'$} \def\cprime{$'$}


\end{document}